\newcommand{\Cay}{\mathop{\mathrm{Cay}}}
\newcommand{\Aut}{\mathop{\mathrm{Aut}}}
\def\Zent#1{{\bf Z}({{#1}})}
\def\cent#1#2{{\bf C}_{{#1}}({{#2}})}
\newtheorem{theorem}{Theorem}[section]
\newtheorem{lemma}[theorem]{Lemma}
\newtheorem{corollary}[theorem]{Corollary}
\newtheorem{prop}[theorem]{Proposition}
\theoremstyle{definition}
\newtheorem{definition}[theorem]{Definition}
\newtheorem{notation}[theorem]{Notation}
\newcommand{\Orr}{\mathop{\mathrm{ORR}}}
\begin{document}
\title{Classification of finite groups that admit an oriented regular representation}  

\author{Joy Morris} 
\address{Joy Morris, Department of Mathematics and Computer Science \\
University of Lethbridge \\
Lethbridge, AB. T1K 3M4. Canada}
\email{joy.morris@uleth.ca}
\thanks{This research was supported in part by the National Science
  and Engineering Research Council of Canada grant RGPIN-2017-04905.}

\author{Pablo Spiga}
\address{Pablo Spiga, Dipartimento di Matematica e Applicazioni, University of Milano-Bicocca, Via Cozzi 55, 20125 Milano, Italy} 
\email{pablo.spiga@unimib.it}

\begin{abstract}
 This is the third, and last, of a series of  papers dealing with oriented regular representations. Here we complete the classification of finite groups that admit an oriented regular representation (or $\Orr$ for short), and  give a complete answer to a 1980 question of L\'{a}szl\'{o} Babai: ``Which [finite] groups admit an oriented graph as a DRR?" It is easy to see and well-understood that  generalised dihedral groups do not admit $\Orr$s. We prove that, with $11$ small exceptions (having orders ranging from $8$ to $64$), every finite group that is not  generalised dihedral  has an $\Orr$.
\end{abstract}

\smallskip

\begin{center}
\textit{To L\'{a}szl\'{o} Babai: for asking questions that keep us very entertained.}
\end{center}

\smallskip


\maketitle

\section{Introduction}\label{s: intro}
All groups and graphs in this paper are finite. 
Let $G$ be a  group and let $S$ be a subset of $G$. The \textbf{\emph{Cayley digraph}}, denoted by $\Cay(G,S)$, over $G$ with connection set $S$ is the digraph with vertex set $G$ and with $(x,y)$ being an arc if $yx^{-1}\in S$. (An \textbf{\em arc} is an ordered pair of adjacent vertices.) Since the group $G$ acts faithfully as a group of automorphisms of  $\Cay(G,S)$ via the right regular representation, Cayley digraphs represent  groups geometrically and combinatorially as groups of automorphisms of digraphs. Naively, the closer $G$ is to the full automorphism group of $\Cay(G,S)$, the closer this representation is to a precise graphical encoding of $G$.

Following this line of thought, it is natural to ask which  groups $G$ admit a subset $S$ with $G$ being the automorphism group of $\Cay(G,S)$; that is, $\Aut(\Cay(G,S))=G$. We say that $G$ admits a  \textbf{\em digraphical regular representation} (or DRR for short) if there exists a subset $S$ of $G$ with $\Aut(\Cay(G,S))=G$. Babai~\cite[Theorem~$2.1$]{babai1} has given a complete classification of the  groups admitting a DRR: except for
\begin{equation}\label{list}Q_8, \,\,C_2^2,\,\, C_2^3,\,\,   C_2^4\,\ \textrm{and}\,\,  C_3^2,
  \end{equation}every  group admits a DRR.

In light of Babai's result, it is natural to try to combinatorially represent  groups as automorphism groups of special classes of Cayley digraphs. Observe that, if $S$ is inverse-closed (that is, $S=S^{-1}:=\{s^{-1}\mid s\in S\}$), then $\Cay(G,S)$ is undirected. Now, we say that $G$ admits a \textbf{\em graphical regular representation} (or GRR for short) if there exists an inverse-closed subset $S$ of $G$ with $\Aut(\Cay(G,S))=G$. With a considerable amount of work culminating in \cite{Godsil,Hetzel}, the  groups admitting a GRR have been completely classified.  (In fact, this question attracted significant interest well before the DRR problem, although the final solutions to both problems appeared at about the same time.)

We recall that a \textbf{\em tournament} is a digraph $\Gamma:=(V,A)$ with vertex set $V$ and arc set $A$ such that, for every two distinct vertices $x,y\in V$, exactly one of  $(x,y)$ and $(y,x)$ is in $A$.  After the completion of the classification of DRRs and GRRs,  Babai and Imrich~\cite{babai2} proved that every group of odd order except for $C_3^2$ and $C_3^3$ admits a \textbf{\em tournament regular representation} (or TRR for short).  That is, each finite odd-order group $G$ different from $C_3^2$ and $C_3^3$ contains a subset $S$ with $\Cay(G,S)$ being a tournament and with $\Aut(\Cay(G,S))=G$. In terms of the connection set $S$, the Cayley digraph $\Cay(G,S)$ is a tournament if and only if $S\cap S^{-1}=\emptyset$ and $G\setminus\{1\}=S\cup S^{-1}$. This observation makes it clear that a Cayley digraph on $G$ cannot be a tournament if $G$ contains an element of order $2$, so only groups of odd order can admit TRRs.

In~\cite[Problem 2.7]{babai1}, Babai observed that one class of Cayley digraphs is rather interesting and  had not been investigated in the context of regular representations; that is, the class of oriented Cayley digraphs (or as Babai called them, oriented Cayley graphs). An \textbf{\em oriented Cayley digraph} is in some sense a ``proper" Cayley digraph. More formally, it is a Cayley digraph $\Cay(G,S)$ whose connection set $S$ has the property that $S \cap S^{-1}=\emptyset$. Equivalently, in graph-theoretic terms, it is a Cayley digraph with no digons. 
\begin{definition}
The group $G$ admits an \textbf{\em oriented regular representation} (or ORR for short) if there exists a subset $S$ of $G$ with $S \cap S^{-1}=\emptyset$ and $\Aut(\Cay(G,S))=G$. 
\end{definition}
Babai asked in~\cite{babai1} which (finite) groups admit an ORR.
Since a TRR is a special type of ORR, and $C_3^2$ is one of the five groups in Eq.~\ref{list} that do not admit a DRR (so cannot admit an ORR), the answer to this question for groups of odd order was already known when Babai published his question.

In this paper, answering the question of Babai and also confirming the conjecture given in~\cite[Conjecture~$1.5$]{morrisspiga}, we prove the following result.
\begin{theorem}\label{conj}
Every finite  group $G$ admits an $\Orr$, unless one of  the following holds:
\begin{description}
\item[(i)]$G$ is generalised dihedral with $|G|>2$ (see Definition~$\ref{def:gendih}$ for the meaning of generalised dihedral);
\item[(ii)]$G$ is isomorphic to one of the following eleven groups
\begin{align*}
&Q_8,\,C_4\times C_2,\, C_4\times C_2^2,\, C_4\times C_2^3,\, C_4\times C_2^4,\,C_3^2,\,C_3\times C_2^3,\\
&\langle a,b\mid a^4=b^4=(ab)^2=(ab^{-1})^2=1\rangle \textrm{ (of order $16$)},\\
&\langle a,b,c\mid a^4=b^4=c^4=(ba)^2=(ba^{-1})^2=(bc)^2=(bc^{-1})^2=1,\\
&\qquad\qquad a^2=c^2,a^c=a^{-1}, a^2=b^2\rangle \textrm{ (of order $16$)},\\
&\langle a,b,c\mid a^4=b^4=c^4=(ab)^2=(ab^{-1})^2=1,\\
&\qquad\qquad(ac)^2=(ac^{-1})^2=(bc)^2=(bc^{-1})^2=a^2b^2c^2=1\rangle \textrm{ (of order $32$)},\\
& D_4\circ D_4 \textrm{ (the central product of two dihedral groups of order $8$,} \\
&\qquad\qquad \textrm{which is the extraspecial group of order $32$ of plus type)}.
\end{align*}
\end{description}
\end{theorem}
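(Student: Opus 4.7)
The plan is to split the theorem into the non-existence part (the groups in (i) and (ii) admit no $\Orr$) and the existence part (every other finite group does).

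For the non-existence side, the fact that generalised dihedral groups of order greater than $2$ admit no $\Orr$ is elementary. Writing $G=A\rtimes\langle t\rangle$ with $A$ abelian and $t$ inverting $A$, every element of $G\setminus A$ has order $2$. Moreover every involution $x$ satisfies $x=x^{-1}$, so no involution can lie in any $S$ with $S\cap S^{-1}=\emptyset$. Hence $S\subseteq A$, so $\langle S\rangle\leq A\lneq G$, and $\Cay(G,S)$ is disconnected, admitting component-permuting automorphisms that do not come from the regular action of $G$. The eleven sporadic exceptions, of orders between $8$ and $64$, are disposed of by direct computation: for each such $G$ one enumerates (up to the action of $\Aut(G)$) all generating subsets $S$ with $S\cap S^{-1}=\emptyset$ and verifies that $\Aut(\Cay(G,S))\neq G$ in every case.

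The existence direction is the substance of the paper. Since every group of odd order other than $C_3^2$ already admits a TRR (hence an $\Orr$) by the theorem of Babai and Imrich, the focus narrows to groups of even order that are not generalised dihedral. The plan is to invoke the reductions from the first two papers of this series, which should already dispatch broad classes of such groups (e.g.\ those with a suitable quotient for which an $\Orr$ has been constructed, or those whose centre and derived structure are rich enough to allow a direct construction), leaving a short list of stubborn $2$-group families to handle here. For each such remaining group $G$ the strategy is: first choose a small generating multiset $T\subseteq G$ such that every automorphism of $\Cay(G,T)$ fixing the identity lies in $\Aut(G)$; then adjoin further carefully chosen elements to obtain $S\supseteq T$ whose stabiliser in $\Aut(\Cay(G,S))$ is trivial in $\Aut(G)$, all while respecting $S\cap S^{-1}=\emptyset$. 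A natural auxiliary tool is inductive lifting: start from an $\Orr$ of a quotient $G/N$ and correct the lifted connection set to kill the new symmetries introduced by passage to $G$.

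The main obstacle is the analysis in the boundary regime of $2$-groups of small exponent, especially exponent $4$ with many involutions, where the budget of non-self-inverse elements is tightest and where the eleven sporadic exceptions cluster. The argument must be sharp enough to separate each exception from its ``generic'' neighbours of comparable order and structural type, and it must organise the case analysis so that every remaining infinite family is covered without introducing any further exceptions. Weaving this delicate case analysis together with the reductions from the earlier two papers, and confirming by computer search the exact extent of the exceptional list, will be the most technical part of the write-up.
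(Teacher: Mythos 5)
Your overall skeleton does match the paper's: reduce via the earlier papers of the series (Theorem~\ref{main}) to a few explicit families of $2$-groups, construct $\Orr$s for all sufficiently large members of those families, and settle what remains by computer; the generalised dihedral obstruction and the role of the eleven sporadic groups are also as you describe. But the existence direction --- which is the entire substance of the result --- appears in your proposal only as a statement of intent. You never produce connection sets for the groups that survive the reduction, namely those of Theorem~\ref{main}~{\bf(ii)} and {\bf(iii)}, and the methods you gesture at would not do the job. These groups contain an elementary abelian $2$-subgroup $B$ of large rank and index $2$, $4$ or $8$, so the beautiful-generating-tuple machinery of the earlier papers breaks down (two generators from the same coset of $B$ have product of order at most $2$), and your proposed ``inductive lifting from an $\Orr$ of a quotient $G/N$'' cannot even start: the natural quotients of these groups are elementary abelian or generalised dihedral and themselves admit no $\Orr$. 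What actually carries the paper is absent from your plan: Lemma~\ref{l:1}, which extends an $\Orr$ of a \emph{normal subgroup} $N$ by coset representatives of order greater than $2$ whose squares centralise $N$ (the existence of such representatives being exactly where the non-generalised-dihedral hypothesis is spent), and, above all, the Imrich-GRR complement trick of Lemma~\ref{mut-innbrs}, Corollary~\ref{cor-innbrs} and Proposition~\ref{B-distinct}: one takes $S\cap Bx=(Bx\setminus Tx)\setminus\{x\}$ for $T$ an Imrich GRR connection set on $B$ of rank at least $6$, and uses in/out-neighbour counts to force every automorphism fixing $1$ to fix $B$ and $x$ pointwise. Without these ideas (or a genuine substitute), your strategy has no way to handle, for instance, $G=V\rtimes\langle x\rangle$ with $V$ elementary abelian and $x$ swapping pairs of basis vectors, which is one of the surviving families and the source of several of the exceptions.

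A second, smaller gap concerns the computations. Enumerating connection sets for the eleven named groups (orders $8$ to $64$) only shows that those groups are exceptions; it does not show the list is complete. Because the general constructions of the paper only apply once the elementary abelian subgroup has rank at least $6$, completeness requires a machine check of \emph{all} groups in the relevant families up to order roughly $2^{9}$ for Theorem~\ref{main}~{\bf(ii)} and $2^{10}$ for Theorem~\ref{main}~{\bf(iii)}, as in Section~\ref{s:conclusion}; a search stopping at order $64$ would leave the possibility of further exceptions of order $128$ through $1024$ unaddressed.
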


We remark that since this theorem relies on results in the previous two papers \cite{morrisspiga,spiga} and since the results in \cite{morrisspiga} depend upon the Classification of Finite Simple Groups, this theorem also depends on the Classification.

In our opinion this is not the final word on oriented regular representations of finite groups. In fact, it is still unclear whether $\Orr$s behave asymptotically like DRRs and GRRs. (It is believed that most Cayley digraphs are DRRs and that most Cayley graphs are GRRs. One should be very careful about how to understand  ``most" in these statements and we refer the reader to the introduction of~\cite{DSV} for two distinct, natural interpretations of  ``most".)

We conclude this introductory section by observing that regular representations have shown a new vitality lately. For instance, Marston Conder, Mark Watkins and Tom Tucker~\cite{WT} have been studying finite groups admitting a graphical  Frobenius  representation and have posed some very intriguing conjectures in this context. All of these conjectures are in line with the classification of groups that admit DRRs, GRRs, TRRs and now ORRs: except for  some ``low level noise" (yielding a finite number of small exceptions) and any obvious obstructions, regular representations of the desired type will exist.  For DRRs, there are no obvious general obstructions; for TRRs, groups of even order are problematic and yield the only general obstruction; for GRRs, groups admitting automorphisms that map each element to itself or to its inverse are problematic and yield  the only general obstruction; for ORRs, groups for which every generating set contains at least one involution (that is, generalised dihedral groups) are problematic and (in light of Theorem~\ref{conj}) yield the only general obstruction.

Finally, we refer to~\cite{DSV,MSV,Spiga,XF} for some recent work on similar problems.

\section{Earlier work and preliminaries}\label{s: 2}
Before moving to the proof of Theorem~\ref{conj} we need to review the main results that have been proved on oriented regular representations. We start with a few definitions.

Babai pointed out in~\cite{babai1} that generalised dihedral groups of order greater than $2$ can never admit an ORR.  (Given a group element $g$, we denote by $o(g)$ its \textbf{{\em order}}.)
\begin{definition}\label{def:gendih}
Let $A$ be an abelian group. The \textbf{{\em generalised dihedral group}} over $A$ is the group $\langle \tau, A \rangle$ with $o(\tau)=2$ and $\tau a \tau=a^{-1}$ for every $a \in A$.
\end{definition}
In the special case where $A$ is cyclic, this is the dihedral group over $A$. Observe that, unless $|G|=2$, if $\Cay(G,S)$ is an ORR, then $\Cay(G,S)$ is connected and hence $S$ is a generating set for $G$. Now, Babai's observation follows immediately from the fact that if $G$ is the generalised dihedral group over the abelian group $A$, then every element of $G\setminus A$ has order $2$. Thus every generating set $S$ for $G$ must contain an involution, so that $S \cap S^{-1} \neq \emptyset$. This renders understanding generalised dihedral groups very important when we are studying ORRs.

Let $G$ be a finite group. As customary, we denote by $d(G)$ the \textbf{{\em minimum number of generators}} for $G$.
Following~\cite[Section~$2$]{spiga}, we say that a generating set $\{g_1,\ldots,g_d\}$ for $G$ is  \textbf{{\em irredundant}} if, for each $i\in \{1,\ldots,d\}$, the $d-1$ elements $$g_1,g_2,\ldots,g_{i-1},\,g_{i+1},\ldots,g_{d}$$ do not generate $G$. Observe that each generating set for $G$ of cardinality $d(G)$ is irredundant.

We say that the  $d$-tuple  $(g_1,\ldots,g_d)$  of elements of $G$ is \textbf{\textit{beautiful}} if the following conditions hold:
\begin{description}
\item[(i)]$\{g_1,\ldots,g_d\}$ is an irredundant generating set for $G$,
\item[(ii)]$o(g_i)>2$ for every $i\in \{1,\ldots,d\}$,
\item[(iii)]$o(g_{i+1}g_i^{-1})>2$ for every $i\in \{1,\ldots,d-1\}$.
\end{description}
Observe that being beautiful is a property of ordered tuples and not of sets; that is, it depends upon the ordering of the generating set $\{g_1,\ldots,g_d\}$ for $G$.

An important connection between beautiful generating tuples and $\Orr$s is given in the next theorem.

\begin{theorem}\label{thrmJoy}
Let $G$ be a finite group admitting a beautiful generating tuple. Then $G$ admits an $\mathrm{ORR}$ if and only if $G \not\cong Q_8$, $G \not\cong C_3 \times C_2^3$, and $G \not\cong C_3 \times C_3$. 
\end{theorem}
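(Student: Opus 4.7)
The forward direction follows immediately from previously established results. The groups $Q_8$ and $C_3\times C_3$ admit no DRR at all, by Babai's classification \eqref{list}, and hence a fortiori no $\Orr$. The remaining exception $C_3\times C_2^3$ is likewise already known not to admit an $\Orr$: it is one of the small cases handled directly in \cite{morrisspiga} and appears in the exception list of Theorem~\ref{conj}. Thus each of the three excluded groups fails to have an $\Orr$, irrespective of whether it has a beautiful tuple.

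For the converse, my plan is to use the beautiful tuple $(g_1,\ldots,g_d)$ to build an explicit $\Orr$. The natural candidate connection set is $S=\{g_1,\ldots,g_d\}$, possibly augmented or modified. Condition (ii) ensures no $g_i$ is its own inverse, and condition (iii) rules out $g_{i+1}=g_i^{-1}$ in the ``generic'' case $o(g_i)\ne 4$. To upgrade this to the full orientedness condition $S\cap S^{-1}=\emptyset$ for every pair, I would exploit the flexibility afforded by an irredundant tuple: if some $g_j=g_i^{-1}$ with $i<j$, I replace one of these elements by a suitable power or a short product that preserves both irredundancy and conditions (ii)--(iii). The combinatorial claim one must verify is that such a replacement can always be performed outside the three exceptional groups; the small-order nature of the exceptions is exactly what makes the replacement fail there.

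The main (and hardest) step is then to prove $\Aut(\Cay(G,S))=G$, equivalently that the stabilizer $A_1$ of the identity vertex in $\Aut(\Cay(G,S))$ is trivial. Any $\alpha\in A_1$ permutes the out-neighborhood $S$, and the goal is to show this permutation is trivial. The main invariants distinguishing the elements of $S$ from each other are the orders $o(g_i)$, the orders of the pairwise ratios $o(g_jg_i^{-1})$, and the number of short oriented walks between specific neighbours of $1$. Irredundancy forces each $g_i$ to play a genuinely distinct role in generating $G$, while condition (iii) provides a ``chain'' of distinguishing data linking $g_i$ to $g_{i+1}$. An inductive argument along the tuple should propagate this local rigidity at $1$; once $A_1=1$, the connectedness of $\Cay(G,S)$ (assured because $S$ generates $G$) upgrades the conclusion to $\Aut(\Cay(G,S))=G$.

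The main obstacle will be this rigidity step. Even when $S\cap S^{-1}=\emptyset$ has been achieved, the set $\{g_1,\ldots,g_d\}$ alone may not suffice to destroy every graph automorphism permuting $S$, so I expect one may need to augment $S$ by a small number of carefully chosen further elements (for instance selected products $g_ig_j$ of non-adjacent generators) to break residual symmetries while maintaining $S\cap S^{-1}=\emptyset$. Balancing these competing constraints is the delicate part, and it is precisely this balance that collapses for $Q_8$, $C_3\times C_3$, and $C_3\times C_2^3$, explaining their appearance as exceptions.
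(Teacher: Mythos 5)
The paper does not actually prove this statement: it records that the result is implicit in \cite{morrisspiga} and refers to \cite[Theorem~2.1]{spiga} for a proof. So the only fair comparison is between your sketch and that (substantial) external argument, and against that standard your proposal has a genuine gap. Everything after ``my plan is to'' is a strategy outline rather than a proof: the central claim, that the vertex stabiliser $\Aut(\Cay(G,S))_1$ is trivial for your connection set, is left at the level of ``an inductive argument \ldots should propagate this local rigidity'' and ``I expect one may need to augment $S$.'' That rigidity step, carried out uniformly for \emph{every} finite group admitting a beautiful tuple except the three listed ones, is the entire content of the theorem and of the theory developed in \cite{morrisspiga}; it cannot be deferred. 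In particular you never identify which augmented set works, never verify $S\cap S^{-1}=\emptyset$ for it, and never explain why the construction fails for precisely $Q_8$, $C_3\times C_3$ and $C_3\times C_2^3$ and for no other group -- ``the balance collapses for these three'' is an assertion, not an argument.

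Two smaller points. First, you slightly misread the role of condition (iii) of beautifulness: irredundancy together with condition (ii) already forces $\{g_1,\dots,g_d\}\cap\{g_1,\dots,g_d\}^{-1}=\emptyset$ (if $g_j=g_i^{-1}$ with $i\ne j$ the set is redundant), so your proposed ``replacement'' procedure addresses a non-problem. The actual purpose of requiring $o(g_{i+1}g_i^{-1})>2$ is to allow the connection set to be augmented by the elements $g_{i+1}g_i^{-1}$ (compare the sets $Y$ in Lemma~\ref{abelian-orr} of this paper) while keeping it orientation-compatible; these extra elements are what make the induced subdigraph on $S$ asymmetric so that Lemma~\ref{Watkins-Nowitz} applies. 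Second, for the forward direction, do not invoke Theorem~\ref{conj}: that theorem is proved using the present statement (via Theorem~\ref{main}), so the appeal is circular. Citing Babai's list for $Q_8$ and $C_3^2$ is fine, and the non-existence of an $\Orr$ for $C_3\times C_2^3$ should be attributed to the verification in \cite{morrisspiga}/\cite{spiga} (or to a direct finite check), not to the main theorem of this paper.
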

This theorem is implicit in~\cite{morrisspiga} and follows immediately from the theory developed therein. For a proof see~\cite[Theorem~$2.1$]{spiga}.

Developing the theory of beautiful generating tuples (and actually something more general, which we called five-product-avoiding generating sets), we have proved in \cite{morrisspiga} that each non-soluble group admits an $\Orr$. Building on this result, the second author has proved the following result, which (among other things) reduces the classification of groups admitting an $\Orr$ to some very specific infinite families of $2$-groups.

\begin{theorem}[(\cite{spiga}, Theorem 1.2)]\label{main}
  Let $G$ be a finite group. Then one of the following holds:
  \begin{description}
  \item[(i)]$G$ admits an $\mathrm{ORR}$;
    \item[(ii)]$G$ has an abelian $2$-subgroup $A$, a normal subgroup $N$ and two elements $g\in G\setminus N$ and $n\in N\setminus A$ with $A<N<G$, $|G:N|=|N:A|=2$, $g^2=1$, $n^g=n^{-1}$ and $a^g=a^{-1}$ for each $a\in A$;
    \item[(iii)]there exists a normal subgroup $N$ of $G$, $g\in G$ and $n_0\in N$ with $|G:N|=2$, $G=\langle N,g\rangle$, $g^2=1$, $N$ is a $2$-group and the action of $g$ by conjugation on $N$ inverts precisely half of the elements of $N$ and $N=H\cup n_0H$, where $H:=\{n\in N\mid n^g=n^{-1}\}$. Moreover, $N$ has no automorphism inverting more than half of its elements. (Every group $N$ that has an automorphism inverting half of its elements and no automorphism that inverts more is classified in~\cite{HeMa} by Hegarty and MacHale);  
\item[(iv)]$G$ is isomorphic to $Q_8$, to $C_3\times C_3$ or to $C_3\times C_2^3$;
\item[(v)]$G$ is generalised dihedral.
    \end{description}
  \end{theorem}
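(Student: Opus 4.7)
The plan is to prove the contrapositive: if $G$ satisfies none of (ii)--(v), then $G$ admits an $\Orr$. By the main result of~\cite{morrisspiga}, non-soluble groups admit $\Orr$s, so we may assume $G$ is soluble. Since $G$ is not one of $Q_8$, $C_3\times C_3$, or $C_3\times C_2^3$ (case (iv)), Theorem~\ref{thrmJoy} reduces the task to exhibiting a beautiful generating tuple for $G$. The whole proof then becomes the following constructive/structural statement: every soluble finite group not belonging to the list (ii)--(v) admits a beautiful generating tuple.

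The construction of a beautiful tuple naturally splits into overcoming two obstructions. First, one needs an irredundant generating set $\{g_1,\dots,g_d\}$ in which no $g_i$ is an involution. I would start from a minimum generating set and, for each $i$, replace $g_i$ by a well-chosen representative of the coset $g_iM_i$, where $M_i$ is a maximal subgroup witnessing the irredundancy of $g_i$. The replacement can be blocked only if the whole coset $g_iM_i$ consists of involutions, and elementary arguments on cosets of maximal subgroups show that this forces $G$ to be generalised dihedral, landing us in case (v).

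The second obstruction is more delicate: given a generating set with no involutions, one must order it so that every consecutive ratio $g_{i+1}g_i^{-1}$ has order greater than $2$. Build the graph $\Gamma$ on $\{g_1,\dots,g_d\}$ by joining $g_i$ to $g_j$ whenever $g_ig_j^{-1}$ is an involution; a beautiful ordering exists precisely when the complement $\bar\Gamma$ admits a Hamiltonian path. If no such path exists for any admissible generating set, then many pairs of generators differ by an involution; the common inverting involution $t=g_ig_j^{-1}$, together with the normal closure $N$ of a suitable subset of the generators, produces an index-$2$ normal subgroup on which an involution $g\in G\setminus N$ acts by inversion on a large subset. Separating according to whether this inverted subset properly contains an abelian $2$-subgroup $A$ with $A<N<G$ (case (ii)) or equals exactly half of $N$ (case (iii)) splits the analysis; in the latter case the Hegarty--MacHale classification~\cite{HeMa} enters to describe the possible $N$.

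The main obstacle is this second stage, where the combinatorial non-existence of a beautiful ordering (for every admissible choice of generating set and every permutation thereof) must be translated into the global group-theoretic rigidity of a single inverting involution. I would handle this by induction on $|G|$, passing to a quotient $G/M$ by a minimal normal subgroup and applying the theorem inductively, combined with a careful lifting argument showing that beautiful orderings lift along short exact sequences \emph{except} when the configurations (ii) or (iii) obstruct them. The bookkeeping of inverted elements and the verification that the only anomalous small cases surviving the induction are exactly those listed in (iv) constitute the technical heart of the argument.
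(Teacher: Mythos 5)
You should first note that the paper does not actually prove Theorem~\ref{main}: it is quoted verbatim from \cite[Theorem~1.2]{spiga}, so there is no in-paper proof to compare against, only the strategy visible from the surrounding discussion and from the companion results. Measured against that, the opening of your proposal is on target: reducing to soluble groups via \cite{morrisspiga}, excluding the groups in (iv), and then invoking Theorem~\ref{thrmJoy} to convert the problem into exhibiting a beautiful generating tuple is indeed the strategy of \cite{spiga}. Your first-stage argument (replace each generator by a coset representative to kill involutions; if every replacement fails then some maximal subgroup is abelian and inverted, so $G$ is generalised dihedral) is essentially the ``minimal number of involutions'' argument that this paper reproduces in the proof of Lemma~\ref{l:1} and attributes to \cite[Lemma~2.6]{morrisspiga}, so that part is sound.

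The genuine gap is everything after that. Conclusions (ii) and (iii) are very precise: a chain $A<N<G$ of index-$2$ subgroups with prescribed inversion actions, or a $2$-group $N$ of index $2$ on which $g$ inverts \emph{exactly} half of the elements with $N=H\cup n_0H$ and with $N$ admitting no automorphism inverting more than half. Your proposal only asserts that the failure of a Hamiltonian path in $\bar\Gamma$, simultaneously for every admissible generating set and ordering, ``produces'' a single inverting involution $t$ and these configurations; no mechanism is given for extracting one involution and one normal subgroup from a family of local obstructions of the form $o(g_ig_j^{-1})=2$, nor for why $N$ must be a $2$-group, nor for why the inverted set has size exactly $|N|/2$ (this threshold is precisely what makes \cite{HeMa} applicable, and reaching it requires quantitative results about automorphisms inverting many elements that you never invoke). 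The inductive step is similarly unsupported: beautiful generating tuples do not obviously lift along $G\to G/M$, and you state no lemma controlling when they do. In short, the proposal is a plausible road map that matches the known approach in outline, but the step that converts the combinatorial failure into the structural dichotomy (ii)/(iii) --- which is the entire content of \cite{spiga} --- is missing rather than merely compressed.
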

In view of this theorem, the classification of finite groups admitting an $\Orr$ is reduced to the groups in ${\bf (ii)}$ and ${\bf (iii)}$. We address these two families in Sections~\ref{s:ii} and~\ref{s:iii}, respectively.

We remark that the ``flavour" of many of the $\Orr$s that are produced in this paper is quite different from those produced in~\cite{morrisspiga} and~\cite{spiga}. Those papers focused on the use of beautiful generating tuples, so that for any two consecutive elements $g_i$ and $g_{i+1}$ of the generating tuple, the product $g_{i+1}g_i^{-1}$ has order greater than $2$. As we will see, a common situation for a group $G$ in the families we study in this paper is that $G$ contains an elementary abelian subgroup $B$ of high rank and low index. In order to generate such a group $G$, we require a large number of elements from at least one of the cosets $Bg$ of $B$. If we place any two elements $g_i:=b_1g$ and $g_{i+1}:=b_2g$ of $Bg$ (where $b_1, b_2 \in B$) consecutively in the generating set, then $g_{i+1}g_i^{-1}=b_2gg^{-1}b_1^{-1}=b_2b_1^{-1} \in B$, so $o(g_{i+1}g_i^{-1}) \le 2$. In principle it could be possible to ensure that whenever $g_i \in Bg$ we have $g_{i+1} \notin Bg$,  but in practice we may still find that $o(g_{i+1}g_i^{-1})=2$. We will therefore take a very different approach that uses a GRR for $B$ as a starting point. In our previous papers, if we look at the induced subdigraph of each of our $\Orr$s on the vertices that lie in the connection set (equivalently, the induced subdigraph on the open neighbourhood of any vertex), that digraph is (weakly) connected and asymmetric. The $\Orr$s we produce in this paper will often include many isolated vertices in that induced subdigraph.

Before moving into our analysis of the groups in ${\bf (ii)}$ and ${\bf (iii)}$ (from above), we introduce some other results from the literature that will be important in our proofs.

The following result is found in the proof of the theorem in~\cite{Imrich2}. Note that the statement in~\cite{Imrich2} assumes only $k \ge 5$, but there is a mistake in the case $k=5$ that has been pointed out by multiple researchers. Although the elementary abelian $2$-group of rank $5$ also admits a GRR, it requires a different connection set so we omit it from our statement.

\begin{lemma}[(Imrich \cite{Imrich2})]\label{Imrich-2-gps}
Let $G$ be an elementary abelian $2$-group of rank $k \ge 6$, and let $\{x_1, \ldots, x_k\}$ be a generating set for $G$. Then $G$ has a $\mathrm{GRR}$; furthermore, a $\mathrm{GRR}$ is given by the Cayley graph on $G$ whose connection set consists of the $2k+1$ elements: 
\begin{equation}\label{im}x_1,\, \ldots,\, x_k,\, x_1x_2,\, x_2x_3,\, \ldots,\, x_{k-1}x_k,\, x_1x_2x_{k-2}x_{k-1},\,x_1x_2x_{k-1}x_k.
  \end{equation}
\end{lemma}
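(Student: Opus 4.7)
The goal is to show that $\Aut(\Gamma) = G$, where $\Gamma := \Cay(G,S)$ and $S$ is the connection set in \eqref{im}. Since the right regular representation of $G$ sits inside $A := \Aut(\Gamma)$ and is regular on $V(\Gamma)$, it suffices to prove that the stabilizer $A_1$ of the identity is trivial. Any $\sigma \in A_1$ permutes $S$ (the neighbourhood of $1$) and preserves the induced subgraph $\Gamma[S]$; because $G$ has exponent~$2$, two elements $s,t \in S$ are adjacent in $\Gamma[S]$ if and only if $st \in S$. The plan is first to show that $\Gamma[S]$ is rigid enough to force $\sigma$ to fix $S$ pointwise, and then to propagate this outward.

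The first step is to compute $\Gamma[S]$ explicitly. Write $y_i := x_i x_{i+1}$ for $1 \le i \le k-1$, and $z_1 := x_1 x_2 x_{k-2} x_{k-1}$, $z_2 := x_1 x_2 x_{k-1} x_k$. Since $\{x_1, \ldots, x_k\}$ is a minimum generating set for $\mathbb{F}_2^k$ and hence a basis, every element of $G$ has a unique support; a case analysis on pairwise products of elements of $S$, using $k \ge 6$ to rule out accidental coincidences among supports of size four, yields the following adjacencies in $\Gamma[S]$: the singletons $x_1, \ldots, x_k$ form an induced path; each $y_i$ is adjacent to $x_i$ and $x_{i+1}$, forming a triangle with them; in addition $y_1$ is adjacent to $y_{k-2}, y_{k-1}, z_1, z_2$; and $z_1 \sim y_{k-2}$, $z_2 \sim y_{k-1}$. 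No other edges occur.

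The second step is to use this structure to pin down $\sigma$. The vertex $y_1$ has degree $6$ in $\Gamma[S]$, while for $k \ge 6$ every other vertex has degree at most $4$; hence $\sigma(y_1)=y_1$ and $\sigma$ permutes $N_{\Gamma[S]}(y_1) = \{x_1, x_2, y_{k-2}, y_{k-1}, z_1, z_2\}$. Degree counts within this set split it into $\{x_1, z_1, z_2\}$ (of local degree~$2$) and $\{x_2, y_{k-2}, y_{k-1}\}$ (of local degree~$4$). Comparing second-order neighbour signatures (lists of degrees of neighbours of neighbours) and exploiting the asymmetric attachment of $z_1, z_2$ to $y_{k-2}, y_{k-1}$ rather than to symmetric positions at both ends of the $x$-path, one shows $\sigma$ fixes each of these six vertices individually. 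Propagating along the triangles $\{x_i, x_{i+1}, y_i\}$ starting from the already-fixed $x_1, x_2$ then forces $\sigma$ to fix every element of $S$.

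Finally, one extends $\sigma|_S = \mathrm{id}$ to $\sigma = \mathrm{id}$ on $G$ by induction on the $S$-distance from~$1$: a vertex $g$ at distance $d$ has a neighbour $g'$ at distance $d-1$ already fixed, and the rigid local structure (common neighbourhoods of $g'$ with appropriately chosen elements of $S$) singles out $g$ uniquely. The main obstacle is the asymmetry step: the natural reflection $x_i \mapsto x_{k+1-i}$ would otherwise induce a non-trivial automorphism of $\Gamma[S]$, and the technical heart of the proof is verifying that the (slightly unusual) attachments $z_1 = x_1x_2x_{k-2}x_{k-1}$ and $z_2 = x_1x_2x_{k-1}x_k$ genuinely break this mirror symmetry. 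The hypothesis $k \ge 6$ is needed precisely to keep the small-support elements from colliding in ways that would create extra symmetries.
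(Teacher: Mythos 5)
The paper itself offers no proof of this lemma: it is quoted from Imrich's paper \cite{Imrich2} (with the caveat about the erroneous $k=5$ case), so the only comparison possible is with the standard argument your sketch reconstructs. Your overall route is the right one, and your core computation is correct: the induced subgraph of $\Gamma:=\Cay(G,S)$ on $S$ is exactly as you describe (the path on $x_1,\ldots,x_k$, a triangle over each edge with apex $y_i:=x_ix_{i+1}$, the extra adjacencies $y_1\sim y_{k-2},y_{k-1},z_1,z_2$, and $z_1\sim y_{k-2}$, $z_2\sim y_{k-1}$, and nothing else), and $y_1$ is indeed the unique vertex of degree $6$ while all others have degree at most $4$ when $k\ge 6$.

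Two points need attention. First, the asymmetry of the induced subgraph on $S$ is the technical heart, and you only gesture at it (``second-order neighbour signatures''). Be aware that first-order data genuinely fails here: the multiset of neighbour degrees of $x_2$ and of $y_{k-1}$ is $\{2,2,4,6\}$ in both cases, so a deeper step is unavoidable. It does go through -- for instance, $x_2\mapsto y_{k-1}$ would force $x_1\mapsto z_2$ and $x_3\mapsto x_{k-1}$, and then $\{x_4,y_3\}$ would have to map onto $\{x_{k-2},y_{k-2}\}$, which is impossible since $y_3$ has degree $2$ while both targets have degree $4$; after that, $y_{k-2}\leftrightarrow y_{k-1}$ is excluded because it would force $\{x_{k-2},x_{k-1}\}\mapsto\{x_{k-1},x_k\}$, mixing degrees $4$ and $2$, and the fixed triangles then propagate along the path. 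As written, though, this step is asserted rather than proved. Second, your final extension step is the one genuinely unjustified part: the proposed induction on $S$-distance, with ``rigid local structure singles out $g$ uniquely,'' is not an argument and is not needed. Once you know that every $\varphi\in\Aut(\Gamma)_1$ fixes $S$ pointwise (which follows from the asymmetry of the induced subgraph on $S$, since $\Aut(\Gamma)_1$ preserves that subgraph), Lemma~\ref{Watkins-Nowitz} of Nowitz and Watkins -- stated in the paper precisely for this purpose -- gives at once that $\varphi$ fixes $\langle S\rangle=G$ pointwise, hence $\Aut(\Gamma)=G$. Replace your last paragraph by that citation and write out the asymmetry check in full, and the proof is complete.
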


Note that the connection set found in this lemma depends on the order as well as the choice of the elements in the generating set for $G$. We therefore use the following definition.

\begin{definition}
Let $G$ be an elementary abelian $2$-group of rank $k \ge 6$. Given the generating tuple $(x_1, \ldots, x_k)$ for $G$, we refer to the connection set given in Eq.~\ref{im} as the \textbf{\emph{\mathversion{bold}Imrich generating set for $G$ with respect to $(x_1, \ldots, x_k)$}}. 
\end{definition}

In their work on the GRR problem, Nowitz and Watkins proved a lemma that is very useful in our context also. (Given a graph $\Gamma$ and a vertex $v$ of $\Gamma$, we denote by $\Aut(\Gamma)$ the automorphism group of $\Gamma$ and by $\Aut(\Gamma)_v$ the stabiliser of the vertex $v$ in $\Aut(\Gamma)$.)

\begin{lemma}[(Nowitz and Watkins~\cite{NW})]\label{Watkins-Nowitz}
Let $G$ be a group, let $S$ be a subset of $G$, let $\Gamma:=\Cay(G,S)$ and let $X$ be a subset of $G$. If $\varphi$ fixes $X$ pointwise for every $\varphi\in \Aut(\Gamma)_1$, then $\varphi$ fixes $\langle X\rangle$ pointwise for every $\varphi\in \Aut(\Gamma)_1$. 
\end{lemma}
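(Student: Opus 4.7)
The plan is to exploit the right regular action of $G$ on itself as automorphisms of $\Gamma$: for each $x\in G$ the map $R_x\colon v\mapsto vx$ lies in $\Aut(\Gamma)$ (since $(vx)(ux)^{-1}=vu^{-1}$, so arcs are preserved), and the key idea is to conjugate a given $\varphi\in \Aut(\Gamma)_1$ by $R_x$ for elements $x$ that $\varphi$ already fixes. This produces a new automorphism still fixing $1$, to which the hypothesis can be reapplied, thereby enlarging the set of elements that $\varphi$ is known to fix.

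Concretely, I would first prove the following bootstrapping step: if $\varphi\in \Aut(\Gamma)_1$ and $\varphi(x)=x$, then $\varphi(yx)=yx$ for every $y\in X$. Set $\psi:=R_x^{-1}\varphi R_x\in \Aut(\Gamma)$. A direct computation yields $\psi(v)=\varphi(vx)x^{-1}$, so $\psi(1)=\varphi(x)x^{-1}=1$, placing $\psi$ in $\Aut(\Gamma)_1$. By the hypothesis applied to $\psi$, we have $\psi(y)=y$ for every $y\in X$, which unpacks to $\varphi(yx)=yx$, as required.

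To finish, I would fix an arbitrary $\varphi\in \Aut(\Gamma)_1$ and use that $G$ is finite, so every element $h\in \langle X\rangle$ can be written as a (positive) word $y_k y_{k-1}\cdots y_1$ with $y_i\in X$. Starting from $\varphi(1)=1$ and iterating the bootstrapping step (taking $x$ to be the current partial product and $y=y_{j+1}\in X$), one sees by induction on $j$ that $\varphi$ fixes each partial product $y_j y_{j-1}\cdots y_1$, and hence $\varphi(h)=h$.

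I do not expect a serious obstacle here: the argument is a single clean conjugation trick combined with a straightforward induction. The only point meriting explicit comment is that the bootstrapping step must be applied with the \emph{left} factor $y$ coming from $X$ (so that the hypothesis on $\psi$ applies), while $x$ is allowed to range over previously-fixed elements; finiteness of $G$ then removes any need to handle inverses separately in the word-length induction.
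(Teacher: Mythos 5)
Your argument is correct: the conjugation $\psi=R_x^{-1}\varphi R_x$ lies in $\Aut(\Gamma)_1$ whenever $\varphi$ fixes $1$ and $x$, so the hypothesis (which applies to \emph{every} element of $\Aut(\Gamma)_1$, in particular to $\psi$) propagates fixedness from $x$ to $Xx$, and finiteness of $G$ lets positive words in $X$ exhaust $\langle X\rangle$. The paper does not reprove this lemma but cites Nowitz and Watkins, and your proof is essentially their standard argument, so there is nothing further to compare.
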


Thus, if $\Aut(\Gamma)_1$ fixes every element of a generating set for $G$ or if the subgraph induced by $\Gamma$ on the neighbourhood $\Gamma(1)=S$ is asymmetric, then $\Aut(\Gamma)= G$. Hence $\Gamma$ is a DRR for $G$, and is therefore an ORR if the connection set satisfies $S\cap S^{-1}=\emptyset$. We will use this fact repeatedly when we cite the above lemma. Also, although Nowitz and Watkins did not make this explicit, the same proof applies if we replace both occurrences of the word ``pointwise" in the statement of Lemma~\ref{Watkins-Nowitz} with the word ``setwise." We will also use this sometimes when we cite the above lemma.

We include in this section two more lemmas that we will need. Lemma~\ref{abelian-orr} follows fairly easily from the work in~\cite{morrisspiga}, but we include a complete proof as the precise connection set (and therefore the fact that the induced subgraph on that connection set is weakly connected) is not easy to see from the statements in that paper.

\begin{lemma}\label{abelian-orr}
Let $A$ be an abelian $2$-group of order $2^k$. Assume that $A$ is not elementary abelian and $A \ncong C_4\times C_2^{k-2}$. Then there is a generating set $S$ for $A$ with $|S|\ge 2$ such that the induced subgraph of $\Gamma:=\Cay(G,S)$ on $S$ is weakly connected with trivial automorphism group, so that $\Gamma$ is an $\Orr$. 
\end{lemma}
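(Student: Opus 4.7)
The plan is to construct $S$ explicitly by a case analysis on the invariant factor decomposition of $A$, and then apply Lemma~\ref{Watkins-Nowitz}. Write $A=\langle x_1\rangle\times\cdots\times\langle x_d\rangle$ with $o(x_i)=2^{a_i}$ and $a_1\ge a_2\ge\cdots\ge a_d\ge 1$. The hypotheses exclude $a_1=1$ (this would make $A$ elementary abelian) and also exclude $a_1=2$ with $a_2\le 1$ (this would mean $A\cong C_4\times C_2^{k-2}$), so either (I) $a_1\ge 3$, or (II) $a_1=a_2=2$.

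In Case~II the core of the connection set is $S_0:=\{x_1,x_2,x_1x_2,x_1^2x_2\}$, living in the $C_4\times C_4$ direct factor spanned by $x_1,x_2$. A direct check shows $S_0\cap S_0^{-1}=\emptyset$ and that the induced subdigraph on $S_0$ has exactly the four arcs $x_1\to x_1x_2$, $x_2\to x_1x_2$, $x_1\to x_1^2x_2$ and $x_1x_2\to x_1^2x_2$; the in/out-degree pairs $(2,0),(1,0),(1,2),(0,2)$ are pairwise distinct and the underlying graph is connected, so this subdigraph is weakly connected with trivial automorphism group. In Case~I the core is $\{x,x^3,x^{-2}\}$ when $a_1=3$ and $\{x,x^3,x^{-2},x^6\}$ when $a_1\ge 4$ (with $x:=x_1$); a similar direct verification produces distinct in/out-degree pairs and a connected underlying graph, the extra element $x^6$ being needed precisely to break the spurious symmetry between $x^3$ and $x^{-2}$ that arises when only $\{x,x^3,x^{-2}\}$ is used and $a_1\ge 4$.

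When $d\ge 2$ in Case~I (or $d\ge 3$ in Case~II) I would adjoin one further element for each additional direct factor, arranging that it has a degree pair distinct from all earlier ones: if $o(x_j)\ge 4$, add $x_j$ directly; if $o(x_j)=2$, add a product such as $x\cdot x_j$ so as to absorb the involution into an element of larger order, ensuring no involution or mutually inverse pair ever enters $S$. At each step the new vertex is attached to the core by one or two controlled arcs, and careful in/out-degree bookkeeping keeps the induced subdigraph asymmetric.

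Once $S$ is constructed, verifying $S\cap S^{-1}=\emptyset$, $\langle S\rangle=A$ and triviality of the automorphism group of the induced subdigraph on $S$ is mechanical. Given triviality, every $\varphi\in\Aut(\Gamma)_1$ fixes $S$ pointwise (being an automorphism of the induced subdigraph on $\Gamma(1)=S$), whence Lemma~\ref{Watkins-Nowitz} forces $\varphi$ to fix $\langle S\rangle=A$ pointwise, so $\Aut(\Gamma)=A$ and $\Gamma=\Cay(A,S)$ is an $\Orr$. The main obstacle will be the absorption step in Case~I with $a_j=1$ for $j\ge 2$: every extra direct factor contributes an involution that must be incorporated while preserving asymmetry, and while a uniform recipe such as $x_j\mapsto x\cdot x_j$ should suffice, the arcs between each $xx_j$ and the core depend delicately on which powers of $x$ lie in $S$, so some case-by-case adjustment of the auxiliary products will likely be needed.
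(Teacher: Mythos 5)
Your two base cases are correct and your overall strategy (an explicit connection set whose induced subdigraph is rigid, followed by Lemma~\ref{Watkins-Nowitz}) is exactly the paper's, but the proof has a genuine gap precisely where you flag an ``obstacle'': the treatment of the additional direct factors, which is where the real content of the lemma lies. Adjoining a single element per extra factor in the way you describe does not attach that vertex to the core at all. If you add $x_j$ alone (for $o(x_j)\ge 4$), then any arc between $x_j$ and a core vertex $u\in\langle x_1,x_2\rangle$ would require $x_ju^{-1}\in S$ or $ux_j^{-1}\in S$; both elements have a nontrivial $x_j$-component, so they could only equal $x_j$ itself, forcing $u=1\notin S$. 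The same computation shows $x\cdot x_j$ is isolated when $o(x_j)=2$ (an arc into it from $x^e$ would need $x^{e-1}x_j\in S$, i.e.\ $e=2$, and $x^2$ is not in your core; arcs among the $xx_j$'s would require the involution $x_jx_{j'}$ to lie in $S$). Consequently the induced subdigraph on $S$ is \emph{not} weakly connected --- which is part of the conclusion you must prove, and is essential for the lemma's later use in Lemma~\ref{l:1} --- and whenever two extra factors have the same order, transposing the corresponding isolated vertices is a nontrivial automorphism, so rigidity fails too. This is not a matter of ``case-by-case adjustment of the auxiliary products''; a structural linking device is missing.

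The paper supplies that device in two parts. First, it chains the generators through the largest factor, taking $x_1:=a_1$ and $x_i:=a_1^{\pm1}a_i$ for $i\ge2$ (resp.\ $x_i:=a_1a_i$ or $a_2a_i$ in your Case~II), so that every generator automatically has order $o(a_1)>2$ and no separate ``absorption'' of involutions is needed. Second, and crucially, it enlarges $S$ by the difference elements $x_2x_1^{-1},x_3x_2^{-1},\ldots,x_mx_{m-1}^{-1}$ (plus one extra element playing the role of your degree-breaking $x^6$ or $x_1^2x_2$). These differences create the arcs $(x_{i-1},x_i)$ and $(x_ix_{i-1}^{-1},x_i)$, producing a directed path through all the generators with pendant vertices hanging off it; irredundancy of the generating set is then used to show these are the \emph{only} arcs, giving weak connectivity and trivial automorphism group simultaneously. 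If you want to salvage your construction you would need to import both ingredients, at which point you have essentially reconstructed the paper's proof.
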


\begin{proof}
From the structure of finite abelian groups, we may write $A=A_1\times \cdots \times A_m$, where $A_i:= \langle a_i \rangle$ is a non-trivial cyclic group for each $i\in \{1,\ldots, m\}$. Moreover, we may assume that $o(a_{i+1})$ divides $o(a_i)$ for every $i\in\{1,\ldots,  m-1\}$.

Since $A$ is not elementary abelian and $A \ncong C_4 \times C_2^{k-2}$, we must have either $o(a_1)>4$, or $m \ge 2$ and $o(a_1)=o(a_2)=4$. In the first case, set $x_1:=a_1$, $x_2:=a_1^{-1}a_2$ and  $x_i:=a_1^{(-1)^{i-1}}a_i$ for every $i\in \{3,\ldots,m\}$; in the second case, set $x_1:=a_1$, $x_2:=a_2$, $x_{2i+1}:=a_1a_{2i+1}$ and $x_{2j}:=a_2a_{2j}$ for every $2i+1,2j\in \{3,\ldots,m\}$. In both cases, it is easy to verify that the set $X:=\{x_1, \ldots, x_m\}$ generates $A$ irredundantly. (In fact, $(x_1, \ldots, x_m)$ is a beautiful generating tuple for $A$.)

Consider $S:=X \cup Y$ where 
\[
Y:=
\begin{cases}\{x_1^2\} &\textrm{if }m=1,\\
\{x_2x_1^{-2}\}\cup\{x_2x_1^{-1},x_3x_2^{-1},\ldots,x_m x_{m-1}^{-1}\}&\textrm{if }m\ge 2 \textrm{ and }o(a_1)>4,\\
  \{x_1x_2\}\cup\{x_2x_1^{-1},x_3x_2^{-1},\ldots,x_m x_{m-1}^{-1}\}&\textrm{if }m\ge 2 \textrm{ and } o(a_1)=o(a_2)=4.
\end{cases}
\]
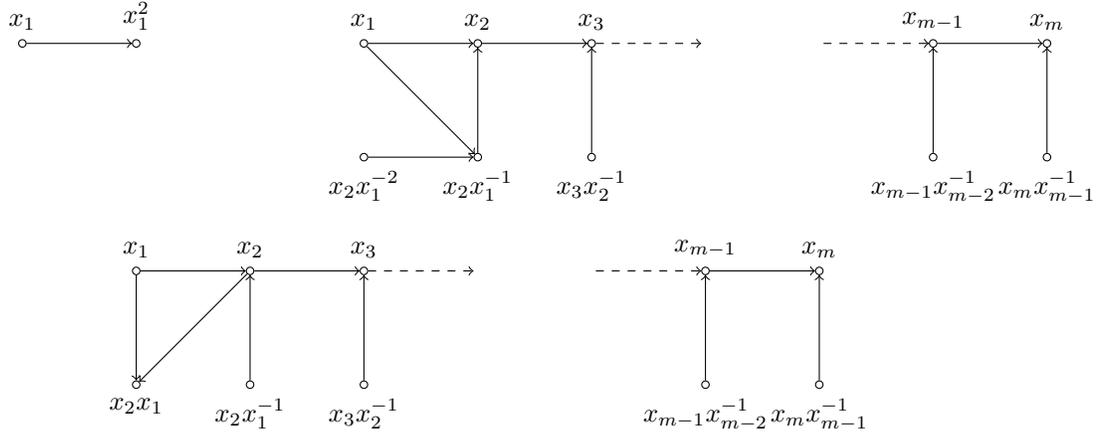
\begin{figure}[!hhh]
\begin{center}
\begin{tikzpicture}[node distance=1.4cm]
\node[circle,draw,inner sep=1pt, label=90:$x_1^2$](A0){};
\node[left=of A0,circle,draw,inner sep=1pt, label=90:$x_1$](A1){};
\draw[->](A1) to  (A0);
\node[right=of A0,circle,inner sep=1pt, label=90:](A33){};
\node[right=of A33,circle,draw,inner sep=1pt, label=90:$x_1$](A3){};
\node[right=of A3,circle,draw,inner sep=1pt, label=90:$x_2$](A4){};
\node[right=of A4,circle,draw,inner sep=1pt, label=90:$x_3$](A5){};
\node[right=of A5,circle,inner sep=1pt, label=90:](A6){};
\node[below=of A6,circle,inner sep=1pt, label=90:](B6){};
\node[right=of A6,circle,inner sep=1pt, label=-90:](AA6){};
\node[right=of AA6,circle,draw,inner sep=1pt, label=90:$x_{m-1}$](A7){};
\node[right=of A7,circle,draw,inner sep=1pt, label=90:$x_m$](A8){};
\node[below=of A3,circle,draw,inner sep=1pt, label=-90:$x_2x_1^{-2}$](B1){};
\node[below=of A4,circle,draw,inner sep=1pt, label=-90:$x_2x_1^{-1}$](B2){};
\node[below=of A8,circle,draw,inner sep=1pt, label=-90:$x_mx_{m-1}^{-1}$](B8){};
\node[below=of A5,circle,draw,inner sep=1pt, label=-90:$x_3x_{2}^{-1}$](B3){};
\node[below=of A7,circle,draw,inner sep=1pt, label=-90:$x_{m-1}x_{m-2}^{-1}$](B7){};
\node[below=of AA6,circle,inner sep=1pt, label=-90:](BB6){};
\draw[->](A3) to  (A4);
\draw[->](A4) to  (A5);
\draw[dashed,->](A5) to  (A6);
\draw[dashed,->](AA6) to (A7);
\draw[->](B7) to (A7);
\draw[->](A7) to  (A8);
\draw[->](B1) to  (B2);
\draw[->](A3) to  (B2);
\draw[->](B3) to  (A5);
\draw[->](B2) to (A4);
\draw[->](B8) to (A8);
\node[below=of A0,circle,inner sep=1pt, label=90:](xx){};
\node[below=of xx,circle,draw,inner sep=1pt, label=90:$x_1$](xA3){};
\node[right=of xA3,circle,draw,inner sep=1pt, label=90:$x_2$](xA4){};
\node[right=of xA4,circle,draw,inner sep=1pt, label=90:$x_3$](xA5){};
\node[right=of xA5,circle,inner sep=1pt, label=90:](xA6){};
\node[below=of xA6,circle,inner sep=1pt, label=90:](xB6){};
\node[right=of xA6,circle,inner sep=1pt, label=-90:](xAA6){};
\node[right=of xAA6,circle,draw,inner sep=1pt, label=90:$x_{m-1}$](xA7){};
\node[right=of xA7,circle,draw,inner sep=1pt, label=90:$x_m$](xA8){};
\node[below=of xA3,circle,draw,inner sep=1pt, label=-90:$x_2x_1$](xB1){};
\node[below=of xA4,circle,draw,inner sep=1pt, label=-90:$x_2x_1^{-1}$](xB2){};
\node[below=of xA8,circle,draw,inner sep=1pt, label=-90:$x_mx_{m-1}^{-1}$](xB8){};
\node[below=of xA5,circle,draw,inner sep=1pt, label=-90:$x_3x_{2}^{-1}$](xB3){};
\node[below=of xA7,circle,draw,inner sep=1pt, label=-90:$x_{m-1}x_{m-2}^{-1}$](xB7){};
\node[below=of AA6,circle,inner sep=1pt, label=-90:](xBB6){};
\draw[->](xA3) to  (xA4);
\draw[->](xA4) to  (xA5);
\draw[dashed,->](xA5) to  (xA6);
\draw[dashed,->](xAA6) to (xA7);
\draw[->](xB7) to (xA7);
\draw[->](xA7) to  (xA8);
\draw[->](xA4) to  (xB1);
\draw[->](xA3) to  (xB1);
\draw[->](xB3) to  (xA5);
\draw[->](xB2) to (xA4);
\draw[->](xB8) to (xA8);
\end{tikzpicture}
\end{center}
\caption{The oriented graph $\Delta$ in the proof of Lemma~\ref{abelian-orr}: top left when $m=1$; top right when $m\ge 2$ and $o(a_1)>4$; and bottom when $m\ge 2$ and $o(a_1)=o(a_2)=4$}\label{Fig1}
\end{figure}
Let $\Gamma:=\Cay(A,S)$ and observe that $S \cap S^{-1}=\emptyset$ so that $\Gamma$ is an oriented digraph. Let $\Delta$ be the subgraph induced by $\Gamma$ on $S$. It is clear that $(x_{i-1},x_i)$ and $(x_ix_{i-1}^{-1},x_i)$ are arcs of $\Delta$ for every $ i\in\{2,\ldots,  m\}$. Moreover, when $m=1$, $(x_1,x_1^2)$ is an arc of $\Delta$; when $m\ge 2$ and $o(a_1)>4$, $(x_1,x_2x_1^{-1})$ and $(x_2x_1^{-2},x_2x_1^{-1})$ are arcs of $\Delta$; while, when $m\ge 2$ and $o(a_1)=o(a_2)=4$, $(x_1,x_1x_2)$ and $(x_2,x_1x_2)$ are arcs of $\Delta$. (See Figure~\ref{Fig1}.) This shows that $\Delta$ is weakly connected. It is not hard (but rather tedious because it requires some detailed computations) to show that these are the only arcs of $\Delta$. We do not give a complete proof of this easy fact, but  we deal with one case to show the type of computations that are required. Suppose that there exists an arc $(x_ix_{i-1}^{-1},x_jx_{j-1}^{-1})$ between two distinct vertices in $Y$, for some $i,j\in \{2,\ldots,m\}$. Then $x_jx_{j-1}^{-1}x_{i-1}x_i^{-1}\in S$ and hence either  $x_jx_{j-1}^{-1}x_{i-1}x_i^{-1}=x_k$ for some $k\in \{1,\ldots,m\}$, or  $x_jx_{j-1}^{-1}x_{i-1}x_i^{-1}=x_kx_{k-1}^{-1}$ for some $k\in \{2,\ldots,m\}$, or $x_jx_{j-1}^{-1}x_{i-1}x_i^{-1}\in 	\{x_1^2,x_1x_2,x_2x_1^{-2}\}$. Each of these relations contradicts the irredundancy of the generating set $X$.

The structure of the arcs that we have described in $\Delta$ (see again Figure~\ref{Fig1}) shows that $\Delta$ has trivial automorphism group. Lemma~\ref{Watkins-Nowitz} then implies that $\Gamma$ is an $\Orr$.
\end{proof}

We conclude this section with a technical lemma. Although we will only apply this lemma with $|G:N| \le 8$, we state it in greater generality as the proof is no different and the general result may be useful in future work on $\Orr$s.
\begin{lemma}\label{l:1}
  Let $G$ be a $2$-group and suppose that $G$ is not generalised dihedral. Let $N$ be a normal subgroup of $G$   with $G/N$ elementary abelian. Suppose that there exists $T\subseteq N$ with $|T|\ge 2$ such that  $\Cay(N,T)$ is an $\Orr$ and such that the subgraph induced by $\Cay(N,T)$ on the neighbourhood $T$ of the vertex $1$ is weakly connected. Let $\kappa:= d(G/N)$. Then there is a set of elements $\{a_1, \ldots, a_\kappa\} \subseteq G \setminus N$ such that:
 \begin{itemize}
 \item $G=\langle a_1, \ldots, a_\kappa, N\rangle$; and
 \item $o(a_i)>2$ for each $i \in \{1, \ldots, \kappa\}$.
 \end{itemize} 
Furthermore, if there exists such a set of elements with the additional property that, for every $i,j \in \{1, \ldots, \kappa\}$ with $i\neq j$, we have $a_i^2$ centralises $N$, and either $a_i^2 \neq a_j^2$ or $a_ia_j$ does not centralise $N$, then $\Cay(G,T\cup \{a_1,\ldots,a_\kappa\})$ is an $\mathrm{ORR}$
\end{lemma}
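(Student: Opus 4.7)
The plan is to handle the two halves separately.

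\emph{First half (existence of the $a_i$).} The plan is to proceed by contradiction. If no such set exists, then the subgroup $H$ generated by $N$ together with every element of $G\setminus N$ of order greater than~$2$ fails to surject onto $G/N$, so $H$ lies in a proper subgroup of $G$ containing $N$. Since $G/N$ is elementary abelian, there is a maximal subgroup $M\supseteq H$ of $G$ of index~$2$, and every element of $G\setminus M$ must be an involution (non-involutions outside $N$ all belong to $H\subseteq M$). Fixing $c\in G\setminus M$, the identity $(mc)^2 = 1$ for each $m\in M$ gives $c^{-1}mc = m^{-1}$, so $c$ inverts $M$; comparing $(m_1 m_2)^{-1} = m_2^{-1} m_1^{-1}$ with $m_1^{-1}m_2^{-1}$ shows $M$ is abelian, so $G = \langle M,c\rangle$ is generalised dihedral, contradicting the hypothesis on $G$.

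\emph{Setup for the second half.} Set $S := T\cup\{a_1,\ldots,a_\kappa\}$ and $\Gamma := \Cay(G,S)$. The condition $S\cap S^{-1} = \emptyset$ follows routinely from $T\cap T^{-1}=\emptyset$, $\o{a_i}>2$, $a_i\notin N$, and linear independence of $\{\bar a_1,\ldots,\bar a_\kappa\}$ in $G/N$; and $\langle S\rangle = G$ is immediate. Fix $\varphi\in\Aut(\Gamma)_1$; I will show $\varphi = 1$ in two steps.

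\emph{Step 1 ($\varphi|_N = \mathrm{id}$).} Let $\Delta$ be the induced subgraph of $\Gamma$ on $S$. A short case analysis using linear independence of the $\bar a_i$ shows that each $a_i$ has in-degree $0$ in $\Delta$, and its only possible out-neighbour in $\Delta$ is $a_i^2$ (arising precisely when $a_i^2\in T$). Combining this with the weak connectivity of $\Delta|_T$, the aim is to deduce that $\varphi$ must preserve the $\kappa$-element set $\{a_1,\ldots,a_\kappa\}$ setwise, and hence $T = S\setminus\{a_1,\ldots,a_\kappa\}$ too. The setwise form of Lemma~\ref{Watkins-Nowitz} then gives $\varphi(N) = N$, so $\varphi|_N$ is an automorphism of the induced subgraph of $\Gamma$ on $N$, which is exactly $\Cay(N,T)$; since the latter is an ORR with trivial stabiliser of~$1$, we conclude $\varphi|_N = \mathrm{id}$. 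I expect this to be the main obstacle: one must rule out $\varphi$ swapping some $a_i$ with a vertex $t\in T$ that happens to be a source of $\Delta|_T$, which will require a finer analysis of the arcs $(a_j,a_j^2)$ combined with the connectivity structure of $\Delta|_T$.

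\emph{Step 2 ($\varphi$ fixes each $a_i$).} Write $\varphi(a_i) = a_{\pi(i)}$. Parametrising out-neighbours of $a_i$ as $sa_i$ with $s\in S$, the only one lying in $N$ is $a_i^2$ (forcing $s=a_i$); since $\varphi|_N = \mathrm{id}$, we get $a_i^2 = a_{\pi(i)}^2$. Suppose $\pi(i) = j \ne i$. A local-constancy argument along the strongly connected coset $Na_i$ (whose induced subgraph in $\Gamma$ is isomorphic to $\Cay(N,T)$) forces $\varphi(a_i n) = a_j n$ for every $n\in N$. Comparing the arcs $(a_i n_1, a_i n_2)$ --- which exist iff $a_i(n_2 n_1^{-1})a_i^{-1}\in T$ --- with their $\varphi$-images $(a_j n_1, a_j n_2)$ then forces conjugation by $a_i a_j^{-1}$ to preserve $T$ setwise; being also a group automorphism of $N$ fixing~$1$, this conjugation is a graph automorphism of the ORR $\Cay(N,T)$ fixing~$1$, hence trivial. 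So $a_i a_j^{-1}$ centralises $N$, equivalently (since $a_j^2$ centralises $N$) $a_i a_j$ centralises $N$, contradicting the extra hypothesis. Therefore $\pi = \mathrm{id}$; so $\varphi$ fixes $S$ pointwise, and a final application of Lemma~\ref{Watkins-Nowitz} yields $\varphi = 1$.
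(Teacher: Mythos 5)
Your argument for the first conclusion is correct and genuinely different from the paper's: the paper takes a generating $\kappa$-tuple modulo $N$ with as few involutions as possible and extracts commutation relations from that minimality, whereas you collect all non-involutions of $G\setminus N$ into a subgroup $H$, trap $\langle H,N\rangle$ inside an index-$2$ subgroup $M$, and note that every element of $G\setminus M$ being an involution forces $G$ to be generalised dihedral over $M$. Both routes are sound; yours is shorter. Your Step 2 is essentially the paper's argument (deduce $a_i^2=a_{\pi(i)}^2$ from the unique out-neighbour of $a_i$ lying in $N$, then show $a_ia_j$ centralises $N$), packaged more conceptually: the paper verifies $n^{a_i}=n^{a_j}$ element by element from the arcs $(na_i,a_ina_i)$, while you observe that $\varphi(ma_i)=ma_j$ forces $T^{a_i}=T^{a_j}$, so that conjugation by $a_ia_j^{-1}$ is an automorphism of the $\Orr$ $\Cay(N,T)$ fixing $1$ and is therefore trivial. (Your claim that $\varphi(a_in)=a_jn$ for all $n$ does need a word of justification --- for instance, the cosets $Na_k$ are exactly the weakly connected components of the subdigraph induced on the set of vertices having exactly one out-neighbour in $N$, so $\varphi$ permutes them --- but this is the same content as the displayed equation $(nv)^\varphi=nv^\varphi$ in the paper's proof.)

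The genuine gap is Step 1, and you have flagged it yourself: you never prove that $\varphi$ preserves $\{a_1,\ldots,a_\kappa\}$ (equivalently $T$) setwise; you only state it as ``the aim,'' and everything downstream depends on it. The paper closes this step by asserting that the vertices $a_1,\ldots,a_\kappa$ are isolated in $\Delta$: since $\Delta|_T$ is weakly connected and $|T|\ge 2$, no vertex of $T$ is isolated in $\Delta$, so $T$ is the unique weakly connected component of $\Delta$ of largest order and is fixed setwise; the setwise form of Lemma~\ref{Watkins-Nowitz} and the $\Orr$ hypothesis on $\Cay(N,T)$ then give $\varphi|_N=\mathrm{id}$. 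Your own computation shows that $a_i$ is isolated in $\Delta$ precisely when $a_i^2\notin T$, so the scenario you worry about (swapping $a_i$ with a source of $\Delta|_T$) is live only if some $a_i^2\in T$ --- a case the paper's ``by construction \dots\ isolated'' silently excludes. To complete your proof you must either justify that exclusion (arranging or verifying that $a_i^2\notin T$ for every $i$) or actually carry out the finer analysis you allude to; as written, the proposal does not establish the second conclusion of the lemma.
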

\begin{proof}
Let $T\subseteq N$  satisfying the hypothesis of this lemma; that is: $|T| \ge 2$, $\Cay(N,T)$ is an $\Orr$, and the subgraph induced by $\Cay(N,T)$ on the neighbourhood $T$ of the vertex $1$ is weakly connected.

We begin by proving the first conclusion of this lemma. (The argument here is similar to the proof of \cite[Lemma $2.6$]{morrisspiga} and uses the ideas therein.) Among all $\kappa$-tuples $a_1,\ldots,a_\kappa$ of elements of $G$ such that $\{a_1N,\ldots,a_{\kappa}N\}$ is a generating set for $G/N$, choose one with as few involutions as possible. If no element in $\{a_1,\ldots,a_\kappa\}$ is an involution, then our first conclusion is proved. Hence we suppose that $\{a_1,\ldots,a_\kappa\}$ has at least one involution. Relabeling the index set $\{1,\ldots,\kappa\}$ if necessary, we may assume that $a_1$ is an involution. 

Let $n\in N$. Now, $\{a_1n,a_2,\ldots,a_\kappa\}$ is still a generating set for $G$ modulo $N$ of cardinality $\kappa$. Since this generating set cannot contain fewer involutions than the original generating set, the element $a_1n$ must be an involution. Thus $1=(a_1n)^2=a_1na_1n=n^{a_1}n$, that is, $n^{a_1}=n^{-1}$. Since this argument does not depend upon $n\in N$, we obtain that $a_1$ acts by conjugation inverting each element of $N$. In  particular, $N$ is abelian.

Let $j\in \{2,\ldots,\ell\}$ and let $n\in N$. Now, $\{a_1a_jn,a_2,a_3,\ldots,a_{\kappa}\}$ is still a generating set for $G$ modulo $N$ of cardinality $\kappa$. Since this generating set cannot contain fewer involutions than the original generating set,  the element $a_1a_jn$ must be an involution. Thus
$$1=(a_1a_jn)^2=a_1a_jna_1a_jn=a_1^2(a_jn)^{a_1}a_jn=a_j^{a_1}n^{a_1}a_jn=a_j^{a_1}n^{-1}a_jn;$$
so $a_j^{a_1}=n^{-1}a_{j}^{-1}n$. By applying this equality with $n:=1$, we deduce that conjugation by $a_1$ inverts $a_j$. Therefore $a_j^{-1}=a_j^{a_1}=n^{-1}a_j^{-1}n$ for each $n\in N$, that is, $a_j$ commutes with $N$.

Let $i,j\in \{2,\ldots,\kappa\}$ with $i\neq j$. Arguing as above, $\{a_1a_ia_j,a_2,a_3,\ldots,a_{\ell}\}$ is still a generating set for $G$ modulo $N$ of cardinality $\kappa$. Since this generating set cannot contain fewer involutions than the original generating set,  the element $a_1a_ia_j$ must be an involution. Thus
$$1=(a_1a_ia_j)^2=a_1a_ia_ja_1a_ia_j=a_1^2(a_ia_j)^{a_1}a_ia_j=(a_i^{a_1}a_j^{a_1})a_ia_j=(a_i^{-1}a_j^{-1})a_ia_j,$$
so $a_ia_j=a_{j}a_i$; that is, $a_i$ and $a_j$ commute. 

This shows that $M:=\langle N,a_2,\ldots,a_\ell\rangle$ is an abelian normal subgroup of $G$. Since $G=\langle M,a_1\rangle$ and $a_1$ has order $2$, we have $|G:M|=2$. Moreover, since the action of $a_1$ by conjugation inverts a generating set for $M$, we see that $G$ is a generalised dihedral group over $M$, contrary to our assumption. Our first conclusion is thus proven.

\smallskip

To complete the proof, we assume from now on that $\{a_1, \ldots, a_\kappa\}$ satisfies the additional hypothesis that, for every $i,j \in \{1, \ldots, \kappa\}$ with $i \neq j$,  we have $a_i^2$ centralises $N$, and either: $a_i^2 \neq a_j^2$, or $a_ia_j$ does not centralise $N$.

Consider the set $S:=T\cup \{a_1,\ldots,a_\kappa\}$. By construction, $S\cap S^{-1}=\emptyset$ and hence $\Gamma:=\Cay(G,S)$ is an oriented digraph. When $\kappa=0$, that is, $G=N$, $\Gamma$ is an $\Orr$ as part of our hypothesis. Thus, for the rest of the proof, we assume that $\kappa\ge 1$.

Let $\varphi$ be an automorphism of $\Gamma$ with $1^\varphi=1$. Now, $\varphi$ fixes the neighbourhood $\Gamma(1)=S$ of the vertex $1$, that is, $\varphi$ acts as a group of automorphisms of the subgraph $\Delta$ induced by $\Gamma$ on $S$. By construction, the vertices $a_1,\ldots,a_\kappa$ are isolated in $\Delta$. As $|T|\ge 2$ and $a_1,\ldots,a_\kappa$ are isolated in $\Delta$,  $T$ is the (unique) connected component of $\Delta$ of largest possible order. Thus $\varphi$ fixes $T$ setwise. Therefore, by Lemma~\ref{Watkins-Nowitz}, $\varphi$ fixes $\langle T\rangle=N$ setwise. Thus $\varphi$ acts as an automorphism of the graph induced by $\Gamma$ on $N$; that is, on the Cayley digraph $\Cay(N,S\cap N)=\Cay(N,T)$. By hypothesis, $\Cay(N,T)$ is an $\Orr$ and hence $\varphi$ fixes $N$ pointwise. Furthermore, this shows that, for every $n \in N$ and every vertex $v$ of $\Gamma$, we have 
\begin{equation}\label{n-fixes}
(nv)^\varphi=nv^\varphi. 
\end{equation}

Suppose that, for some $i \in \{1, \ldots, \kappa\}$, we have $a_i^\varphi\neq a_i$. Since $a_i^\varphi \in S$, there exists $j \in \{1, \ldots, \kappa\}\setminus \{i\}$ with $a_i^\varphi=a_j$. 
Now $(a_i,a_i^2)$ is an arc of $\Gamma$ and hence $(a_i^\varphi, (a_i^2)^\varphi)$ is also an arc of $\Gamma$. Since $G/N$ is elementary abelian, we have $a_i^2\in N$ and, since $\varphi$ fixes $N$ pointwise, we have $(a_i^2)^{\varphi}=a_i^2$. Therefore, $a_i^2(a_i^\varphi)^{-1}=a_i^2 a_j^{-1}\in S\setminus N=\{a_1,\ldots,a_\kappa\}$. Since $d(G/N)=\kappa$, this forces $a_i^2 a_j^{-1}=a_j$, so $a_i^2=a_j^2$.

By Eq.~\eqref{n-fixes}, $(na_i)^\varphi=na_j$ for every $n\in N$. Fix $n\in N$, and observe that $a_ina_i\in N$ because $G/N$ is elementary abelian and that $(a_ina_i)^\varphi=a_ina_i$ because $\varphi$ fixes $N$ pointwise. Since $(na_i,a_ina_i)$ is an arc of $\Gamma$, $((na_i)^\varphi,(a_ina_i)^\varphi)=(na_j,a_ina_i)$ is also an arc of $\Gamma$; that is, $$a_ina_i(na_j)^{-1}=a_i^{2}(a_i^{-1}na_i)(a_j^{-1}n^{-1}a_j)a_j^{-1}=a_i^2n^{a_i}(n^{-1})^{a_j}a_j^{-1}\in S.$$ As $a_i^2n^{a_i}(n^{-1})^{a_j}a_j^{-1}$ lies in the coset $Na_j^{-1}$ and $G/N$ is elementary abelian, we deduce $a_i^2n^{a_i}(n^{-1})^{a_j}a_j^{-1}=a_j$; that is, $n^{a_i}=n^{a_j}$. Thus $n^{a_ia_j}=(n^{a_i})^{a_j}=n^{a_j^2}=n$ because $a_j^2$ centralises $N$ by hypothesis. As this argument does not depend upon $n\in N$,  $a_ia_j$ centralises $N$, contradicting our hypothesis.

Since $i$ was arbitrary and our only assumption was that $(a_i)^\varphi \neq a_i$, we conclude that $\varphi$ fixes $S$ pointwise, so by Lemma~\ref{Watkins-Nowitz} we have $\varphi=1$ and $\Gamma$ is an $\Orr$.
\end{proof}

\section{Constructing $\Orr$s in groups that have low-index elementary abelian subgroups}

In this section, we prove some results that will be useful for constructing $\Orr$s on many groups that have elementary abelian subgroups of low index. These results make intense use of Imrich generating sets for GRRs on elementary abelian groups of rank at least $6$, so they require the elementary abelian subgroup to be of sufficiently high rank.

Given a Cayley digraph $\Cay(G,S)$ and two vertices $x$ and $y$, we say that $x$ and $y$ are \textbf{\mathversion{bold}adjacent via $s$} if $yx^{-1}=s \in S$. Similarly, we say that $x$ and $y$ are \textbf{\mathversion{bold}adjacent via $X$} if $yx^{-1} \in X\subset S$.

\begin{lemma}\label{mut-innbrs}
Let $B$ be an elementary abelian $2$-group of rank $k \ge 6$, and let $T$ be the Imrich generating set for $B$ (with respect to some generating tuple). Let $G$ be a group with $B \le G$, let  $Bx$ be a coset of $B$ in $G$ and let $S$ be any subset of $G$ with $S \cap Bx=Tx\cup \{x\}$. Then in $\Cay(G,S)$, $x$ is the only vertex of $S \cap Bx$ that has at least three mutual inneighbours via elements of $S \cap Bx$ with every other vertex of $S \cap Bx$.
\end{lemma}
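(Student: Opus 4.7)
The plan is to reduce the statement to a counting problem about products of pairs of elements of $T$. Let the Imrich generating set be $T = \{x_1,\ldots,x_k\} \cup \{y_1,\ldots,y_{k-1}\} \cup \{p,q\}$, where $y_i := x_i x_{i+1}$, $p := x_1x_2x_{k-2}x_{k-1}$, and $q := x_1x_2x_{k-1}x_k$. Write $U := T \cup \{1\}$, so $S \cap Bx = Ux$. An inneighbour $w$ of $v = ux \in S \cap Bx$ via an element of $S \cap Bx$ satisfies $vw^{-1} = u'x$ for some $u' \in U$, giving $w = x^{-1}uu'x$; thus $v$ has exactly $|U| = 2k+2$ such inneighbours, all lying in $x^{-1}Bx$. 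Consequently, for distinct vertices $v_i = u_i x$ in $S \cap Bx$, the number of mutual inneighbours via $S \cap Bx$ equals $|Uu_1 \cap Uu_2| = |U \cap Ur|$, where $r := u_1u_2 \in B \setminus \{1\}$. Since $B$ is elementary abelian, expanding $U = T \cup \{1\}$ gives
\[
|U \cap Ur| = |T \cap Tr| + 2[r \in T],
\]
where $[\cdot]$ is the Iverson bracket and $|T \cap Tr|$ equals twice the number of unordered pairs $\{s_1, s_2\} \subseteq T$ with $s_1 s_2 = r$. In particular this count is always even, so ``at least three'' is equivalent to ``at least four''.

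Using this formula, I would first show that $x$ has at least four mutual inneighbours via $S \cap Bx$ with every other vertex $t'x$ (so $t' \in T$): here $r = t' \in T$, and the count is $|T \cap Tt'| + 2$, so it suffices to exhibit a single pair $\{s_1, s_2\} \subseteq T$ with $s_1 s_2 = t'$. Explicit decompositions are immediate from the structure above: $x_i = x_{i+1} \cdot y_i$ for $1 \le i \le k-1$, $x_k = x_{k-1} \cdot y_{k-1}$, $y_i = x_i \cdot x_{i+1}$, $p = y_1 \cdot y_{k-2}$, and $q = y_1 \cdot y_{k-1}$.

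The crux is to show conversely that for every $t \in T$ there exists $t' \in T \setminus \{t\}$ with $tt' \notin T$ and $\{t,t'\}$ the \emph{unique} unordered pair in $T$ with product $tt'$, which by the displayed formula forces the mutual inneighbour count of $tx$ and $t'x$ to be exactly $2$. The natural candidates are: $t' = x_j$ with $|i-j| \ge 3$ and $\{i,j\} \neq \{k-2, k\}$ when $t = x_i$; $t' = x_j$ with $j \notin \{i-2, i-1, i, i+1, i+2\}$ and avoiding the boundary coincidences with $p$ or $q$ when $t = y_i$, to be replaced by some $t' = y_j$ in the corner case $k = 6$ and $i \in \{1, k-2\}$ where no valid $x_j$ exists; and $t' = x_3$ when $t \in \{p, q\}$, in which case $tt'$ has $5$-element support.

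The main obstacle is verifying uniqueness in this last step, which requires a systematic enumeration of how a product of two elements of $T$ can arise, cross-checked against the pair-types $\{x_a, x_b\}$, $\{x_a, y_b\}$, $\{y_a, y_b\}$, $\{x_a, p\}$, $\{x_a, q\}$, $\{y_a, p\}$, $\{y_a, q\}$, and $\{p, q\}$. Several non-obvious coincidences must be ruled out: for example, $\{x_i, y_{i+1}\}$ and $\{x_{i+2}, y_i\}$ share the product $x_i x_{i+1} x_{i+2}$; the pairs $\{y_1, y_2\}$ and $\{x_1, x_3\}$ both yield $x_1 x_3$; each $\{x_a, p\}$ for $a \in \{1, 2, k-2, k-1\}$ shares its $3$-element-support product with a corresponding $\{x_b, y_c\}$; and $\{p, q\}$ has product $x_{k-2} x_k$. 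Avoiding all of these dictates the candidate $t'$ above; the verification is routine once carried out type by type on $t$, with only the case $k = 6$ requiring care at the boundary values of $i$.
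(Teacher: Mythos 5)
Your proposal is correct and follows essentially the same route as the paper: both reduce the mutual-inneighbour count for $u_1x,u_2x$ to counting factorisations of $u_1u_2$ as a product of two elements of $T\cup\{1\}$, exhibit one extra factorisation when one of the vertices is $x$, and for each $t\in T$ choose a $t'$ whose product with $t$ factors uniquely. The only substantive deviation is your choice of witness for $t=x_ix_{i+1}$ (an $x_j$, with a $y_j$ fallback at the $k=6$ boundary, versus the paper's uniform $x_{i\pm2}x_{i\pm2+1}$), and your uniqueness verifications are carried to at least the level of detail the paper itself provides.
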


\begin{proof}
Let the generating tuple be $(z_1, \ldots, z_k)$.
We begin by showing that $x$ has at least three common inneighbours via elements of  $S\cap Bx$ with every other vertex of $S \cap Bx$. Let $tx$ be an arbitrary vertex of $(S \cap Bx)\setminus\{x\}$, so $t \in T$. Observe that $y$ is a mutual inneighbour via elements of $S \cap Bx$ of $x$ and $tx$ if and only if there exist $t_1,t_2 \in T\cup \{1\}$ such that $t_1xy=x$ and $t_2xy=tx$. Equivalently, $y=t_1^x$ and $t_1t_2=t$. This is clearly satisfied with $y=t_1=1$ and $t_2=t$, so that $1$ is a mutual inneighbour. It is also always satisfied by taking $y=t^{x}$, $t_1=t$, and $t_2=1$, so that $t^{x}$ is a mutual inneighbour. The third mutual inneighbour depends on $t$.

If $t=z_i$ for some $ i\in\{1,\ldots,k-1\}$, then taking $t_1:=z_iz_{i+1}$, $t_2:=z_{i+1}$, and $y:=t_1^{x}$ gives $y$ as a third mutual inneighbour. If $t=z_k$, then taking $t_1:=z_{k-1}z_k$, $t_2:=z_{k-1}$, and $y:=t_1^{x}$ gives $y$ as a third mutual inneighbour.
If $t=z_iz_{i+1}$ for some $i\in\{1,\ldots k-1\}$, then taking $t_1:=z_i$, $t_2:=z_{i+1}$, and $y:=t_1^{x}$ gives $y$ as a third mutual inneighbour.
Finally, if $t=z_1z_2z_{\ell-1}z_{\ell}$ for $\ell \in \{k-1,k\}$, then taking $t_1:=z_1z_2$, $t_2:=z_{\ell-1}z_{\ell}$, and $y:=t_1^{x}$ gives $y$ as a third mutual inneighbour.

Now we show that for any vertex $tx$ of $Tx$, there is some vertex $t'x$ of $Tx\setminus\{tx\}$ such that $tx$ and $t'x$ have fewer than three common inneighbours via elements of $S \cap Bx$. Observe that $y$ is a mutual inneighbour via elements of $S \cap Bx$ of $tx$ and $t'x$ if and only if there exist $t_1,t_2 \in T\cup \{1\}$ such that $t_1xy=tx$ and $t_2xy=t'x$. Equivalently, $y=(t_1t)^x$ and $t_1t_2=tt'$. This is clearly satisfied with $y=1$, $t_1=t$, and $t_2=t'$. It is also satisfied with $y=(tt')^{x}$, $t_1=t'$, and $t_2=t$. Thus $1$ and $(tt')^{x}$ are mutual inneighbours of $tx$ and $t'x$. 

If $t=z_i$ for some $ i\in\{1,\ldots, k\}$, then let $t':=z_j$ for some $j \notin\{i-2,i-1,i,i+1,i+2\}$ (such a $j$ exists since $k \ge 6$). There is no way to write $z_iz_j$ as a product of two elements $t_1, t_2 \in T\cup\{1\}$ except when $\{t_1,t_2\}=\{z_i,z_j\}$. Thus there is no third mutual inneighbour of $z_ix$ and $z_jx$ via elements of $S \cap Bx$.
Similarly, if $t=z_iz_{i+1}$, then let $t'$ be either $z_{i+2}z_{i+3}$ when $i \le k-3$, or $z_{i-2}z_{i-1}$ otherwise. Again, if $tx$ and $t'x$ had a third mutual inneighbour, then there would be some other way of writing some $z_{j}z_{j+1}z_{j+2}z_{j+3}$  (where $j \in \{i,i-2\}$) as a product of two elements of $T \cup \{1\}$, but this is not possible. 
Finally, if $t=z_1z_2z_{\ell-1}z_{\ell}$ where $\ell \in \{k-1,k\}$, then let $t':=z_3$. There is no other way of writing $z_1z_2z_3z_{\ell-1}z_{\ell}$ as a product of two elements of $T \cup \{1\}$. This completes the proof. 
\end{proof}

Unfortunately, to make the above lemma directly useful in building $\Orr$s, we would need to know that every automorphism of $\Cay(G,S)$ that fixes $1$ also fixes $S \cap Bx=Tx \cup \{x\}$ setwise. This is challenging to prove in general. It turns out to be much easier to prove that if $S \cap Bx= (Bx \setminus Tx)\setminus\{x\}$, then every automorphism of $\Cay(G,S)$ that fixes $1$ also fixes $S \cap Bx$. We therefore prove the following result about the case where $S \cap Bx= (Bx \setminus Tx)\setminus\{x\}$, which is a corollary to the above lemma.

\begin{corollary}\label{cor-innbrs}
Let $B$ be an elementary abelian $2$-group of rank $k \ge 6$, and let $T$ be the Imrich generating set for $B$ (with respect to some generating tuple). Let $G$ be a group with $B \le G$, let $Bx$ be a coset of $B$ in $G$ and let $S$ be any subset of $G$ with $S \cap Bx=(Bx\setminus Tx)\setminus \{x\}$. If $\Aut(\Cay(G,S))_1$ fixes $S\cap Bx$ setwise,  then $\Aut(\Cay(G,S))_1$ fixes $x$.
\end{corollary}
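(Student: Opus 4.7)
I would handle the statement by a two-step narrowing: first, show that $\varphi\in\Aut(\Cay(G,S))_1$ (assumed to fix $S\cap Bx$ setwise) must also fix the coset $Bx$ setwise; second, use a count analogous to that of Lemma~\ref{mut-innbrs} to pin down $x$ within $Bx\setminus(S\cap Bx)=\{x\}\cup Tx$. The key invariant is
\[\mu(v,u):=\bigl|\{y\in G: vy^{-1}\in S\cap Bx \text{ and } uy^{-1}\in S\cap Bx\}\bigr|,\]
the number of common inneighbours of $v$ and $u$ via arcs labelled in $S\cap Bx$; substituting $y=s^{-1}v$ for $s\in S\cap Bx$ gives the equivalent form $\mu(v,u)=|(S\cap Bx)\cap(uv^{-1})(S\cap Bx)|$. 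One checks that, because $\varphi$ is a digraph automorphism permuting $S\cap Bx$ (and fixing $1$), $\mu$ is a $\varphi$-invariant on pairs of vertices.

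For Step~1, observe that $\mu(v,u)>0$ demands a common $y$ with $vy^{-1},uy^{-1}\in S\cap Bx\subseteq Bx$, forcing $uv^{-1}\in(Bx)(Bx)^{-1}=B$ and hence $u,v$ to lie in the same right coset of $B$. Conversely, for $v=b_vx$ and $u=b_ux$ in $Bx$, parametrising $y\in x^{-1}Bx$ yields
\[\mu(v,u)=\bigl|B\setminus\bigl((T\cup\{1\})\cup(T\cup\{1\})b_ub_v\bigr)\bigr|\geq 2^k-2(2k+2)=2^k-4k-4,\]
which is strictly positive for $k\geq 6$. Hence, fixing any $u_0\in S\cap Bx$, the set $\{v\in G:\mu(v,u_0)>0\}$ coincides exactly with $Bx$; the $\varphi$-invariance of this property forces $\varphi(Bx)=Bx$, and consequently $\varphi$ fixes $\{x\}\cup Tx$ setwise.

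For Step~2, the formula above rewrites as $\mu(v,u)=2^k-4k-4+g(uv^{-1})$ for $v,u\in Bx$, where $g(r)$ denotes the number of ordered pairs $(t_1,t_2)\in(T\cup\{1\})^2$ with $t_1t_2=r$. This $g$ is precisely the quantity analysed in the proof of Lemma~\ref{mut-innbrs}, whose bookkeeping establishes $g(t)\geq 3$ for every $t\in T$, while for each $t_0\in T$ it exhibits an element $t\in T\setminus\{t_0\}$ with $g(tt_0)\leq 2$. Translating back, $\mu(x,tx)\geq 2^k-4k-1$ for every $t\in T$, whereas each $t_0x\in Tx$ admits some $tx\in Tx\setminus\{t_0x\}$ with $\mu(t_0x,tx)\leq 2^k-4k-2$. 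Hence $x$ is the unique vertex of $\{x\}\cup Tx$ whose $\mu$-count with every other element of $\{x\}\cup Tx$ exceeds $2^k-4k-2$; since $\varphi$ permutes $\{x\}\cup Tx$ and preserves $\mu$, this forces $\varphi(x)=x$. The main obstacle is transferring the $g$-estimates from the proof of Lemma~\ref{mut-innbrs} with care, as the distinguishing gap between the two thresholds is a single unit, and the existence of a ``bad partner'' $t\in T\setminus\{t_0\}$ for every $t_0\in T$ relies critically on the hypothesis $k\geq 6$.
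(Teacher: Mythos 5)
Your strategy is essentially the paper's: make $Bx$ (hence $\{x\}\cup Tx=Bx\setminus(S\cap Bx)$) invariant under $\varphi$, then separate $x$ from $Tx$ using the mutual-inneighbour bookkeeping of Lemma~\ref{mut-innbrs}; and your Step~2 arithmetic (the identity $\mu(v,u)=2^k-4k-4+g(vu^{-1})$ on pairs from $Bx$, together with the transferred bounds $g(t)\ge 3$ and $g(tt')\le 2$) is a correct reformulation of what the paper calls applying Lemma~\ref{mut-innbrs} ``with the set $S$ replaced by $G\setminus S$''. The gap is the sentence asserting that $\mu$ is a $\varphi$-invariant on pairs of vertices ``because $\varphi$ is a digraph automorphism permuting $S\cap Bx$ and fixing $1$''. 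An automorphism of $\Cay(G,S)$ fixing $1$ permutes the connection set as a set of \emph{vertices}; it does not preserve the labelling of arcs by connection-set elements, so the condition $vy^{-1}\in S\cap Bx$ on an arc $(y,v)$ is not carried over to the image arc $(y^\varphi,v^\varphi)$. What a digraph automorphism does preserve is the number of common inneighbours of a pair that lie in a $\varphi$-invariant set of \emph{vertices}. For $v,u\in Bx$ the two notions agree --- an inneighbour $y$ of $v\in Bx$ has $vy^{-1}\in Bx$ exactly when $y\in x^{-1}Bx$, so $\mu(v,u)$ is the number of common inneighbours of $v$ and $u$ lying in $x^{-1}Bx$ --- but this only makes $\mu$ usable once you already know that $x^{-1}Bx$ and $Bx$ are $\varphi$-invariant. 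Your Step~1 derives the invariance of $Bx$ \emph{from} the invariance of $\mu$, so the argument is circular exactly at its foundation.

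That missing invariance is the opening move of the paper's proof: it first pins down $B^x=x^{-1}Bx$ setwise (as the vertices having an outneighbour in $S\cap Bx$ via an element of $S\cap Bx$), then identifies $Tx\cup\{x\}$ as the non-$S$ outneighbours of $B^x$, and only then runs a count that is label-free given those invariant sets (``at least three vertices of $B^x$ that are inneighbours of neither $y$ nor $x$''). To repair your write-up you need a $\varphi$-invariant identification of $x^{-1}Bx$ (or of $Bx$) \emph{before} any comparison of $\mu$-values is legitimate; be aware that unrestricted inneighbour counting cannot supply this on its own, because the corollary places no restriction on $S$ outside $Bx$, so $S\setminus Bx$ can create arbitrarily many unlabelled adjacencies into $S\cap Bx$ from vertices outside $x^{-1}Bx$. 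In the contexts where the corollary is actually applied this is harmless ($S\setminus Bx$ is very small, or the invariance of $Bx$ has already been proved), but your unconditional claim that $\mu$ is preserved on all pairs is false as stated, and the proof does not go through without it.
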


\begin{proof}
 Assume that, for every  $\varphi \in \Aut(\Cay(G,S))_1$, $(S\cap Bx)^\varphi=S\cap Bx$.
Observe first that $B^x$ is uniquely determined setwise as the set of vertices that have an outneighbour in $S\cap Bx$ via some element of $S \cap Bx$. Furthermore, $X:=Tx\cup \{x\}$ is uniquely determined as the set of vertices that are not in $S$, but are outneighbours of some vertex in $B^x$ via some element of $S \cap Bx$. Now Lemma~\ref{mut-innbrs} (applied with the set $S$ replaced by $G\setminus S$) tells us that $x$ is the only vertex in $X$ having the property that, for every other vertex $y \in X$, there are at least three vertices of $B^x$ that are not inneighbours of either $y$ or $x$. Thus $x^\varphi=x$, for every $\varphi\in \Aut(\Cay(G,S))_1$.
\end{proof}

Finally, we can describe a situation that will arise frequently in which $S \cap Bx= (Bx \setminus Tx)\setminus\{x\}$ is fixed setwise by every automorphism $\varphi$ of $\Cay(G,S)$ that fixes $1$. We show that in this situation, $B$ and $x$ are fixed pointwise by $\varphi$, which is a significant step toward proving that $\Cay(G,S)$ is an $\Orr$.

\begin{prop}\label{B-distinct}
Let $G$ be a group, and suppose that $B<G$ where $B$ is an elementary abelian group of rank $k \ge 6$, and $x \in G$  centralises $B$. Let $T$ be the Imrich generating set for $B$ with respect to some generating tuple, and let $S$ be a generating set for $G$ such that $S=[(Bx\setminus Tx)\setminus\{x\}]\cup X$, where $X \subseteq G\setminus \langle B,x\rangle$ with $|X| \le 17$. Then $\Aut(\Cay(G,S))_1$ fixes $B$ and $x$ pointwise. 
\end{prop}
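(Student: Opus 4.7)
The plan is to proceed in three stages. First, I would show that any $\varphi \in \Aut(\Cay(G,S))_1$ fixes $S\cap Bx$ setwise; then I would invoke Corollary~\ref{cor-innbrs} to deduce that $\varphi(x)=x$ and that $\varphi$ fixes $B$ setwise; finally, I would exploit the Imrich $\mathrm{GRR}$ structure via a bipartite double cover argument to conclude that $\varphi$ fixes $B$ pointwise.

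For the first stage, the crucial structural observation is that $S\cap Bx$ is an independent set of size at least $2^{k}-2k-2\geq 50$ in the induced subgraph $\Delta$ of $\Cay(G,S)$ on the vertex set $S$. Indeed, for $s_1,s_2\in S\cap Bx$ we have $s_2 s_1^{-1}\in B$ (since $x$ centralises $B$), and $S\cap B=\emptyset$ because $X\cap\langle B,x\rangle=\emptyset$ and $S\subseteq Bx\cup X$. Moreover, for $s\in S\cap Bx$ and $s'\in S$ with $s's^{-1}\in S$, coset considerations (using $X\cap\langle B,x\rangle=\emptyset$) force $s's^{-1}\in X$; hence each vertex of $S\cap Bx$ has both in-degree and out-degree at most $|X|\leq 17$ in $\Delta$. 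Leveraging the strong discrepancy between $|S\cap Bx|\geq 50$ and $|X|\leq 17$, I would argue combinatorially that $\varphi$ must preserve $S\cap Bx$ setwise. Once this is done, Corollary~\ref{cor-innbrs} directly yields $\varphi(x)=x$; its proof further shows that $\varphi$ preserves $B=B^x$ and $Tx\cup\{x\}$ setwise, whence $\varphi$ also fixes the coset $Bx$ setwise.

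For the final stage, I would introduce the auxiliary bipartite graph $\mathcal{B}$ on the disjoint union $B\sqcup Bx$ whose edges are the pairs $\{b,cx\}$ with $bc\in T\cup\{1\}$; equivalently, these are precisely the non-arcs of $\Cay(G,S)$ between $B$ and $Bx$ (again using that $x$ centralises $B$). Since $\varphi$ preserves non-arcs and each of $B$ and $Bx$, it acts as a side-preserving automorphism of $\mathcal{B}$. Under the identification $cx\leftrightarrow c$, the graph $\mathcal{B}$ is the bipartite double cover of $\Cay(B,T)$ together with the perfect ``diagonal'' matching arising from the element $1\in T\cup\{1\}$. By Lemma~\ref{Imrich-2-gps}, $\Cay(B,T)$ is a $\mathrm{GRR}$, so $\Aut(\Cay(B,T))=B$; moreover, since $T$ generates $B$, the graph $\Cay(B,T)$ is connected and non-bipartite. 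Standard bipartite double cover theory then gives that the full automorphism group of the double cover is $B\times\mathbb{Z}/2$ (the diagonal copy of $\Aut(\Cay(B,T))$ together with the canonical swap), and the diagonal translations automatically preserve the additional matching. Hence the side-preserving automorphisms of $\mathcal{B}$ are exactly the diagonal right-translations $(\rho_d,\rho_d)$ with $d\in B$; since $\varphi$ fixes $1$, we must have $d=1$, so $\varphi|_B$ is the identity and $\varphi$ fixes $B$ pointwise (and $x$).

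The hard part will be Stage~1: ruling out the possibility that $\varphi$ swaps some elements of $S\cap Bx$ with elements of $X$. The bounds $|S\cap Bx|\geq 50$ and $|X|\leq 17$ provide just the margin needed for the combinatorial argument, but the details are delicate.
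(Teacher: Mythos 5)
Your overall skeleton (fix $S\cap Bx$ setwise, invoke Corollary~\ref{cor-innbrs} to fix $x$, then use the GRR property of $\Cay(B,T)$ to pin down $B$ pointwise) matches the paper's, but the step you yourself flag as ``the hard part'' is exactly where the paper's one genuinely new idea lives, and your sketched replacement for it does not work. Working inside the induced subgraph $\Delta$ on $S$ and comparing $|S\cap Bx|\ge 50$ with $|X|\le 17$ cannot succeed: the vertices of $X$ may also be isolated (or have tiny degree) in $\Delta$, so ``large independent set of low-degree vertices'' is not an invariant that separates $S\cap Bx$ from $X$, and nothing in $\Delta$ alone prevents an automorphism from trading a few vertices of $S\cap Bx$ for vertices of $X$. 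The paper avoids $\Delta$ entirely and counts globally: a vertex $v$ of $\Cay(G,S)$ is an outneighbour of at least $2^k-4k-4$ vertices of $S$ if and only if $v\in Bx^2$ (each $bx^2$ misses at most the $2k+2$ inneighbours $cx$ with $c\in b(T\cup\{1\})$, while any $v\notin Bx^2$ has at most $17+17=34<2^k-4k-4$ inneighbours in $S$). This identifies $Bx^2$ setwise; then $Bx$ is recovered as the set of vertices all but at most $17$ of whose outneighbours lie in $Bx^2$, which gives $(S\cap Bx)^\varphi=S\cap Bx$. That passage through the \emph{second} coset $Bx^2$ is the missing ingredient in your Stage~1.

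Stage~3 also has a gap: the assertion that for a connected non-bipartite graph the automorphism group of its bipartite double cover is $\Aut(\Gamma)\times\mathbb{Z}/2$ is not a theorem --- it is the definition of a \emph{stable} graph, and connected non-bipartite unstable graphs exist, so you would still have to prove stability (or the corresponding rigidity of two-fold automorphisms) for the Imrich graph with the diagonal matching adjoined. The paper does not need this machinery: once $x^\varphi=x$, the proof of Corollary~\ref{cor-innbrs} gives $(Tx)^\varphi=Tx$, and the GRR property of $\Cay(B,T)$ together with Lemma~\ref{Watkins-Nowitz} then fixes $Bx$ and hence $B$ pointwise. So while your route could perhaps be repaired, both its load-bearing steps currently rest on arguments that either fail (Stage~1) or cite a false general principle (Stage~3).
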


\begin{proof}
We claim that a vertex of $\Cay(G,S)$ is an outneighbour of at least $2^k-4k-4$ vertices of $S$ if and only if it is an element of $Bx^2$. 

Let $bx^2$ be an arbitrary element of $Bx^2$. Then, for any $c \in (B\setminus T)\setminus \{1\}$, we have $bc \in B$, so $(bcx)(cx)=bx^2$ is an outneighbour of $cx$ unless $bc \in T \cup \{1\}$; that is, unless $c \in b(T \cup \{1\})$. Since there are $2k+2$ elements in $b(T\cup 1)$, there are at most $2k+2$ vertices in $(Bx\setminus Tx)\	\setminus \{x\}$ that are not inneighbours of $bx^2$. This means that there are at least $2^k-4k-4$ vertices in $S$ that are inneighbours of $bx^2$.

On the other hand, if we take a vertex $v$ that is not in $Bx^2$ and it is an outneighbour of at least $2^k-4k-4$ vertices of $S$, then since $2^k-4k-4>17$, $v$ must be an outneighbour of some vertex $bx \in Bx \cap S$. As we observed above, outneighbours of vertices of $Bx$ via elements of $S \cap Bx$ are in $Bx^2$, so $v\notin Bx^2$ implies that $v$ is an outneighbour of at most $17$ vertices that are in $Bx$. Therefore, $v$ is an outneighbour of at most $34$ vertices of $S$ (at most $17$ in $Bx$ and at most $17$ in $X$). Since $2^k-4k-4>34$, this completes the proof of our claim.

Let $\varphi\in\Aut(\Cay(G,S))_1$. The previous paragraph shows that $(Bx^2)^\varphi=Bx^2$. Since $Bx$ is  the set of vertices of $\Cay(G,S)$ such that all but at most 17 of their outneighbours lie in $Bx^2$, we also have $(Bx)^\varphi=Bx$. Thus $(S\cap Bx)^\varphi=S\cap Bx=(Bx\setminus Tx)\setminus \{x\}$. 

Applying Corollary~\ref{cor-innbrs} to our graph, we see that $x^\varphi=x$, and so $(Tx)^\varphi=Tx$. Since $T$ is the connection set for a GRR on $B$ and $x$ is fixed, this implies that $Bx$ is fixed pointwise, which means that (using Lemma~\ref{Watkins-Nowitz}) $B$ is fixed pointwise.
\end{proof}

We will be using this proposition for most groups $G$ arising from Theorem~\ref{main} that contain an elementary abelian  subgroup of low index. As long as $G$ is sufficiently large, the elementary abelian subgroup will have high enough rank to allow us to apply this result. For small orders of $G$, we will make use of computations in \texttt{magma} to complete the proof.

\section{The groups in Theorem~$\ref{main}$~{\bf(ii)}}\label{s:ii}
In this section we deal with the groups arising from Theorem~\ref{main}~{\bf (ii)}. First, we give a result that finds an $\Orr$ for all but three specific families of groups. 

\begin{prop}\label{prop:reduction}
Let $G$ be a $2$-group with an abelian subgroup $A$, with a normal subgroup $N$ and with two elements $g\in G\setminus N$ and $n\in N\setminus A$ such that
$$A<N<G,\, |G:N|=|N:A|=2,\,g^2=1,\,n^g=n^{-1},\, \textrm{ and }a^g=a^{-1}, \textrm{ for each }a\in A.$$
Then one of the following holds:
\begin{itemize}
\item[(a)] $G$ has an $\Orr$;
\item[(b)] $\ell$ and $\kappa$ are non-negative integers, $V$ is an elementary abelian $2$-group of rank $2\ell +\kappa$ with generating set $$\{v_1,w_1,\ldots, v_\ell,w_\ell,e_1,\ldots, e_\kappa\},$$ and $G=\langle V , x\rangle$, where  $v_i^x=w_i$, $w_i^x=v_i$ for $i \in\{1,\ldots, \ell\}$ and $e_i^x=e_i$ for $i\in\{1,\ldots, \kappa\}$; and
\begin{itemize}
\item[(i)] $x^2=1$; or
\item[(ii)] $\kappa\ge 1$ and $x^2=e_1$;
\end{itemize}
\item[(c)] $G$ contains a maximal subgroup $D$ isomorphic to $D_4 \times C_2^\ell$ for some $\ell \in \mathbb N$ ($D_4$ is the dihedral group of order $8$); or
\item[(d)] $G$ is generalised dihedral.
\end{itemize}
\end{prop}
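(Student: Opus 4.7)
The plan is to construct an $\Orr$ for $G$ whenever $G$ does not belong to families (b), (c), or (d). I may first assume $G$ is not generalised dihedral (otherwise (d) holds). Note that $A\triangleleft G$: indeed $a^g = a^{-1}\in A$ shows $A^g = A$, and $A\triangleleft N$ since $|N:A|=2$. Moreover, $G/A$ is elementary abelian of order $4$, because $g^2=1$ and $n^2\in A$ force both $gA$ and $nA$ to square to the identity in $G/A$. This places us in position to apply Lemma~\ref{l:1} (with the lemma's ``$N$'' being our $A$, and $\kappa = d(G/A) = 2$).

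When $A$ is not elementary abelian and $A\not\cong C_4\times C_2^{k-2}$, Lemma~\ref{abelian-orr} provides a connection set $T\subseteq A$ so that $\Cay(A,T)$ is an $\Orr$ whose induced subgraph on $T$ is weakly connected. I then invoke Lemma~\ref{l:1}, choosing $a_1,a_2 \in G\setminus A$ of order greater than $2$ from the three cosets $An$, $Ag$, $Ang$, with $a_i^2$ centralising $A$ and satisfying $a_1^2\neq a_2^2$ or $a_1a_2\notin C_G(A)$. This yields the desired $\Orr$; the inability to select such a pair tightly constrains the orders and commutator actions of elements of $G\setminus A$.

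When $A$ is elementary abelian of rank at least $6$, the condition $a^g=a^{-1}$ is trivially satisfied, so $g\in C_G(A)$; I then apply Proposition~\ref{B-distinct} with $B:=A$, $x:=g$, and $T$ the Imrich generating set for $A$ from Lemma~\ref{Imrich-2-gps}, completing the connection set $((Ag\setminus Tg)\setminus\{g\})\cup X$ by a small $X\subseteq G\setminus\langle A,g\rangle$ of size at most $17$ containing $n$ (or a suitable translate) together with any further elements needed to generate $G$. The infinite family $A\cong C_4\times C_2^{k-2}$ excluded from Lemma~\ref{abelian-orr} requires a bespoke construction mirroring the proof of that lemma, and the remaining small cases---$A$ elementary abelian of rank at most $5$---yield $|G|$ bounded by an absolute constant and are handled by ad-hoc arguments or by a direct verification in \texttt{magma}.

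The main obstacle will be showing that when every one of these constructions fails, $G$ must fall into family (b) or (c). Family (b), with its pair-swap-plus-fix action of $x$ on the elementary abelian $V$, should arise when essentially every element of $G\setminus A$ is an involution whose square fails the centralisation hypothesis of Lemma~\ref{l:1}, forcing the rigid $V\rtimes\langle x\rangle$ structure with $x$ of order at most $2$. Family (c), in which $G$ contains $D_4\times C_2^\ell$ as a maximal subgroup, reflects a parallel local obstruction where the non-commutativity of $G$ is concentrated in a single $D_4$ factor. Cleanly separating these two possibilities and verifying that they exhaust the failure modes of the construction will require a detailed case analysis of the involutions in $G\setminus A$, of the squares of elements in the three non-trivial cosets of $A$, and of their conjugation actions on $A$.
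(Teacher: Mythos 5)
Your treatment of the generic case agrees with the paper's: with $A\lhd G$ and $G/A$ elementary abelian of order $4$, Lemma~\ref{abelian-orr} together with Lemma~\ref{l:1} gives an $\Orr$ whenever $A$ is neither elementary abelian nor isomorphic to $C_4\times C_2^{k-2}$. Note, moreover, that in that case there is no ``inability to select such a pair'' to worry about: every element of $gA$ is an involution (since $(ag)^2=a\,a^g=1$), so the two non-involutions supplied by the first part of Lemma~\ref{l:1} necessarily lie one in $nA$ and one in $ngA$; their squares lie in the abelian group $A$ and centralise it, and their product lies in $gA$, which cannot centralise $A$ because $A$ does not have exponent $2$. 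So conclusion (a) holds outright there.

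The genuine gaps are in the two excluded cases, where you try to build $\Orr$s instead of deriving conclusions (b) and (c). If $A$ is elementary abelian, then $g$ centralises $A$, so again every element of $Ag$ is an involution; hence your proposed connection set $[(Ag\setminus Tg)\setminus\{g\}]\cup X$ satisfies $S\cap S^{-1}\neq\emptyset$ and cannot define an oriented digraph (and Proposition~\ref{B-distinct} only fixes $B$ and $x$; it never by itself yields an $\Orr$). What is actually needed here is the structural statement (b): one views $V:=\langle A,g\rangle$ as a $\mathbb{Z}\langle n\rangle$-module, decomposes it into swapped pairs $v_i,w_i$ and fixed generators $e_i$, and then replaces $n$ by $n\prod_{i\in I}v_i$ to normalise $n^2$ to $1$ or to $e_1$; your heuristic about involutions ``whose square fails the centralisation hypothesis'' does not produce this normal form (squares of involutions are trivial and centralise everything), and without that normal form the later Lemmas~\ref{biproof} and~\ref{biiproof} have nothing to act on. Likewise, when $A\cong C_4\times C_2^{k-2}$ nothing should be constructed: $\langle A,g\rangle$ is generalised dihedral over $A$, hence isomorphic to $D_4\times C_2^{k-2}$ and maximal in $G$, which is exactly conclusion (c). A ``bespoke construction mirroring Lemma~\ref{abelian-orr}'' cannot work uniformly, because this family contains groups with no $\Orr$ at all (for instance $D_4\circ D_4$ satisfies the hypotheses of the proposition with $A\cong C_4\times C_2$ and is one of the exceptions in Theorem~\ref{conj}), and its large members are only handled later by the quite different argument of Lemma~\ref{cproof}. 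Since you explicitly defer the identification of the failure modes with (b) and (c) as ``the main obstacle'', and the constructions you propose in those two cases cannot succeed as stated, the proposal does not yet prove the proposition.
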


\begin{proof}
  We assume that conclusion (d) does not hold, that is, $G$ is not generalised dihedral.

Suppose that $A$ has exponent 2. Then $V:=\langle A,g\rangle$ is a maximal subgroup of $G$ and is elementary abelian. Now, $V$ is a $\mathbb{Z}\langle n\rangle$-module. Therefore,  from the structure theorem of finitely generated $\mathbb{Z}\langle n\rangle$-modules, we may write $V=A_1\times A_2\times \cdots \times A_{\ell+\kappa}$, where $A_i:=\langle v_i\rangle$ is a non-identity non-trivial cyclic $\mathbb{Z}\langle n\rangle$-module for each $i\in\{1,\ldots,\ell\}$, and $A_{\ell+i}:=\langle e_i\rangle$ is a non-identity trivial (and hence cyclic) $\mathbb{Z}\langle n\rangle$-module for each $i\in \{1,\ldots,\kappa\}$.  For each $i\in \{1,\ldots,\ell\}$, write $w_i:=v_i^n$. As $n^2\in A$ centralises $V$,
$v_1,w_1,v_2,w_2,\ldots,v_\ell,w_\ell,e_1,\ldots,e_\kappa$ is a basis of $V$ (as a vector space over the field of cardinality $2$) and $|V|=2^{2\ell+\kappa}$. 

Now the action of $n$ on $V$ is encoded in the pair $(\ell,\kappa)$. Thus, to determine the isomorphism class of $G$ it suffices to determine $n^2$. Write $n^2=v_1^{\varepsilon_1}w_1^{\varepsilon_1'}\cdots v_\ell^{\varepsilon_\ell}w_\ell^{\varepsilon_\ell'}e_1^{\eta_1}\cdots e_\kappa^{\eta_\kappa}$, for some $\varepsilon_1,\varepsilon_1',\ldots,\varepsilon_\ell,\varepsilon_\ell',\eta_1,\ldots,\eta_\kappa\in \{0,1\}$. Since $n$ centralizes $n^2$ and $v_{i}^n=w_{i}$, $w_i^n=v_i$, we deduce that $\varepsilon_i=\varepsilon_i'$ for every $i\in \{1,\ldots,\ell\}$. Let $I:=\{i\in \{1,\ldots,\ell\}\mid \varepsilon_i=1\}$ and write $n':=n\prod_{i\in I}v_i$. Now, $n$ and $n'$ induce the same action by conjugation on $V$ and $(n')^2=e_1^{\eta_1}\cdots e_\kappa^{\eta_\kappa}$. In particular, replacing $n$ by $n'$ if necessary, we may assume that $\varepsilon_i=0$ for each $i\in \{1,\ldots,\ell\}$.

If $\eta_1=\cdots=\eta_\kappa=0$, then $n^2=1$ and $G$ splits over $V$. Taking $x:=n$, this is conclusion (b)(i).

If $\eta_i\ne 0$ for some $i\in \{1,\ldots,\kappa\}$, then $n^2\ne 1$ and $e_1,\ldots,e_{i-1},n^2,e_{i+1},\ldots,e_\kappa$ are linearly independent and span $\langle e_1,\ldots,e_\kappa\rangle$. Therefore, up to a change of basis in $\langle e_1,\ldots,e_\kappa\rangle$, we may assume that $n^2=e_1$. This is conclusion (b)(ii).

Suppose that $A\cong C_4\times C_2^\ell$, for some $\ell\in\mathbb{N}$. Now $D:=\langle A,g\rangle$ is generalised dihedral isomorphic to $D_4\times C_2^\ell$. This is conclusion (c).

For the rest of the proof, we may assume that $A$ is neither elementary abelian nor isomorphic to $A\cong C_4\times C_2^\ell$, for some $\ell\in\mathbb{N}$. By Lemma~\ref{abelian-orr}, there exists $T\subseteq A$ with $|T|\ge 2$ such that $\Cay(A,T)$ is an $\Orr$ and such that the induced subgraph of $\Cay(A,T)$ on $T$ is weakly connected. As $G/A$ is elementary abelian of order $4$ and $G$ is not generalised dihedral, by Lemma~\ref{l:1} applied to the normal subgroup $A$, there exist $a_1,a_2\in G\setminus N$ with $G=\langle a_1,a_2,N\rangle$, $o(a_1)>2$ and $o(a_2)>2$.  Since $G\setminus A=nA \cup gA \cup ngA$ and every element of $gA$ is an involution, the set $\{a_1,a_2\}$ consists of one element of $nA$ and one of $ngA$. Since $G/A$ is elementary abelian and $A$ is abelian, $a_1^2$ and $a_2^2$ are in $A$ and both centralise $A$. Now $a_1a_2 \in gA$ and no element of $gA$ centralises $A$, because $A$ does not have exponent $2$. Thus, the second hypothesis of Lemma~\ref{l:1} holds, and we deduce that $G$ admits an $\Orr$; that is, conclusion~(a) holds.
\end{proof}

We break the rest of this section into subsections, each dealing with one of the three families of groups described in this proposition that are not generalised dihedral, but have not yet been shown to admit $\Orr$s.

\subsection{The groups of Proposition~\ref{prop:reduction}(b)(i).}

Let $G$ be a group that arises in Proposition~\ref{prop:reduction}(b)(i). Then we have $G=V\rtimes\langle x\rangle$, where $V=\langle v_1, w_1, \ldots, v_\ell, w_\ell, e_1, \ldots, e_\kappa\rangle$ is an elementary abelian $2$-group, $x^2=1$, $e_i^x=e_i$ for every $i\in\{1,\ldots, \kappa\}$, and $v_i^x=w_i$ for every $i\in\{1,\ldots,\ell\}$. 

We begin by observing that, if $\ell \le 1$, then $G$ is a generalised dihedral group.

\begin{lemma}\label{ell=1}
Let $G$ be a group that arises in Proposition~$\ref{prop:reduction}(b)(i)$. If $\ell \le 1$, then $G$ is generalised dihedral.
\end{lemma}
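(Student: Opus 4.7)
The plan is to dispose of the two cases $\ell=0$ and $\ell=1$ separately, and in each case to exhibit an abelian subgroup $A$ of index $2$ in $G$ together with an involution $\tau\in G\setminus A$ that inverts every element of $A$; this is precisely the data needed to witness that $G$ is generalised dihedral over $A$.

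When $\ell=0$ the argument is immediate. Since there are no pairs $(v_i,w_i)$, the element $x$ centralises $V=\langle e_1,\ldots,e_\kappa\rangle$, so $G=V\times\langle x\rangle$ is an elementary abelian $2$-group. Taking $A:=V$ and $\tau:=x$, the subgroup $A$ has index $2$ and exponent $2$, so $\tau$ trivially centralises (hence inverts) every element of $A$.

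When $\ell=1$, the key step is to recognise $G$ as $D_4\times C_2^\kappa$. Setting $H:=\langle v_1,w_1,x\rangle$, a direct computation gives $(xv_1)^2=v_1^x v_1=w_1v_1\neq 1$, so $xv_1$ has order $4$, and $w_1(xv_1)w_1=xw_1=(xv_1)^{-1}$, so $H=\langle xv_1\rangle\rtimes\langle w_1\rangle\cong D_4$. The subgroup $E:=\langle e_1,\ldots,e_\kappa\rangle$ is centralised by each of $v_1$, $w_1$, $x$ and satisfies $H\cap E\subseteq\langle v_1,w_1\rangle\cap E=\{1\}$, whence $G=H\times E\cong D_4\times C_2^\kappa$. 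Setting $A:=\langle xv_1\rangle\times E\cong C_4\times C_2^\kappa$ and $\tau:=w_1$, the involution $\tau$ inverts the $C_4$-generator $xv_1$ and centralises (hence inverts) the exponent-$2$ factor $E$, so $\tau$ inverts all of $A$. Since $\langle \tau,A\rangle$ contains $w_1$, $(xv_1)^2=w_1v_1$ (hence $v_1$), $xv_1$ (hence $x$) and each $e_i$, we have $G=\langle\tau,A\rangle$, exhibiting $G$ as generalised dihedral over $A$.

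The only non-routine step is spotting the appropriate element of order $4$ in the $\ell=1$ case, namely the product $xv_1$ rather than any element of $V$ or of $\langle x\rangle$; equivalently, recognising $H\cong D_4$. Once this is in place, every remaining verification reduces to an elementary conjugation calculation inside $H$.
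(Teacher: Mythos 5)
Your proof is correct and is essentially the paper's argument: in the $\ell=0$ case both note $G$ is elementary abelian, and in the $\ell=1$ case your subgroup $A=\langle xv_1\rangle\times\langle e_1,\ldots,e_\kappa\rangle$ with inverting involution $w_1$ is exactly the paper's $V'=\langle v_1x,e_1,\ldots,e_\kappa\rangle$ inverted by $w_1$. The detour through the identification $G\cong D_4\times C_2^\kappa$ is harmless extra scaffolding but changes nothing of substance.
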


\begin{proof}
If $\ell=0$, then $G$ is elementary abelian, which is a special form of generalised dihedral.
If $\ell=1$, then $V'=\langle v_1x,e_1, \ldots, e_\kappa\rangle$ is an abelian group having index $2$ in $G$ (since $(v_1x)^2=v_1w_1$). Also, the action of $w_1$ by conjugation inverts every element of $V'$. Thus, $G$ is a generalised dihedral group over $V'$.
\end{proof}

We now show that there is an $\Orr$ for any remaining sufficiently large group of this type.

\begin{lemma}\label{biproof}
Let $G$ be a group that arises in Proposition~$\ref{prop:reduction}(b)(i)$. If $\ell \ge 2$ and $2\ell+\kappa \ge 8$,  then $G$ has an $\Orr$.
\end{lemma}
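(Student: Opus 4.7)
The plan is to apply Proposition~\ref{B-distinct} with a carefully chosen elementary abelian subgroup $B$ and a centralising element $y$ of order~$4$, handling a short finite list of small exceptions by computer.

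Set $y := v_1 x$, which has order~$4$ since $y^2 = v_1 w_1 \ne 1$. The centraliser $C := \cent{V}{x} = \langle v_1 w_1, \ldots, v_\ell w_\ell, e_1, \ldots, e_\kappa\rangle$ coincides with $\Zent{G}$, and $y$ centralises $C$. Define
\[
B := \langle v_2 w_2, \ldots, v_\ell w_\ell, e_1, \ldots, e_\kappa\rangle \le C,
\]
an elementary abelian group of rank $\ell+\kappa-1$ chosen so that $y^2 = v_1 w_1 \notin B$. Then $\langle B, y\rangle = C\langle y\rangle = \cent{G}{y} =: N$, which is normal in $G$ (it contains $G'$) with $G/N \cong C_2^\ell$. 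Crucially, $y^2 \notin B$ yields $By \cap (By)^{-1} = \emptyset$, since $(by)^{-1} = (y^2 b) y$ and $y^2 B$ is a coset of $B$ different from $B$.

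Suppose first that $\ell+\kappa \ge 7$, so $B$ has rank at least $6$. Let $T$ be the Imrich generating set for $B$ with respect to the tuple $(v_2 w_2, \ldots, v_\ell w_\ell, e_1, \ldots, e_\kappa)$, set
\[
X := \{v_2 x\} \cup \{v_1 v_i x : 2 \le i \le \ell\} \subseteq G \setminus N,
\]
a collection of $\ell$ elements of order~$4$, and let $S := \bigl((By \setminus Ty) \setminus \{y\}\bigr) \cup X$. One checks directly that $S$ generates $G$ (the $By$-part together with $y^2$ generates $N$, and the images of the elements of $X$ in $V/C$ generate $G/N$) and that $S \cap S^{-1} = \emptyset$ (by examining $X$ against itself and against $(By)^{-1} = By^3$ using the $\mathbb{F}_2$-coordinates in $V$). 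For $\ell \le 17$ Proposition~\ref{B-distinct} applies immediately; for $\ell > 17$ the bound in the proof of that proposition is comfortably loosened since $(2^{\ell+\kappa-1} - 4(\ell+\kappa-1) - 4)/2 \gg \ell$. Either way, every $\varphi \in \Aut(\Cay(G,S))_1$ fixes $B$ and $y$ pointwise, and Lemma~\ref{Watkins-Nowitz} upgrades this to $\varphi$ fixing $N = \langle B, y\rangle$ pointwise.

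It remains to show that $\varphi$ fixes each element of $X$. The squares $(v_1 v_i x)^2 = (v_1 w_1)(v_i w_i)$ for $2 \le i \le \ell$ and $(v_2 x)^2 = v_2 w_2$ are pairwise distinct elements of $N$: the first $\ell-1$ lie in the coset $By^2$, while the last lies in $B$, and these cosets of $B$ in $C$ are disjoint because $y^2 \notin B$. Since $a^2 \in N$ is fixed by $\varphi$, the arc $(a, a^2)$ in $\Cay(G,S)$ maps to $(\varphi(a), a^2)$, so $a^2 \varphi(a)^{-1} \in S$. A short $\mathbb{F}_2$-coordinate computation shows that for any $a' \in X \setminus \{a\}$ the element $a^2 (a')^{-1}$ lies in neither $(By \setminus Ty)\setminus\{y\}$ nor $X$, forcing $\varphi(a) = a$. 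Hence $\varphi$ fixes $S$ pointwise, Lemma~\ref{Watkins-Nowitz} yields $\varphi = 1$, and $\Cay(G,S)$ is an $\Orr$. The remaining range $\ell + \kappa \le 6$ (with $\ell \ge 2$ and $2\ell + \kappa \ge 8$) comprises the nine pairs $(2,4), (3,2), (3,3), (4,0), (4,1), (4,2), (5,0), (5,1), (6,0)$, giving groups of order at most $2^{13}$, which we dispatch by direct computation in \texttt{magma}. The main obstacle is the coordinate-level verification that distinguishes elements of $X$ from one another via their squares; the small-case enumeration is routine but unavoidable since the rank of $B$ drops below the Imrich threshold at the boundary.
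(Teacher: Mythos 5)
Your construction for the range $\ell+\kappa\ge 7$ is correct and genuinely different from the paper's: you place the Imrich structure inside a central subgroup $B\le\cent{V}{x}$ so that Proposition~\ref{B-distinct} applies directly (your extension of that proposition from $|X|\le 17$ to $|X|=\ell$ does go through, since $2^{\ell+\kappa-1}-4(\ell+\kappa-1)-4>2\ell$ whenever $\ell+\kappa\ge 7$), and the coordinate checks that $S\cap S^{-1}=\emptyset$, that $S$ generates $G$, and that the squares of the elements of $X$ separate them all verify.

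The genuine gap is the tail. Insisting that $y=v_1x$ centralise $B$ forces $B\le\cent{V}{x}$, whose rank is only $\ell+\kappa$, and excluding $y^2=v_1w_1$ caps the rank of $B$ at $\ell+\kappa-1$; under the hypotheses $\ell\ge 2$ and $2\ell+\kappa\ge 8$ this can be as small as $3$ (take $(\ell,\kappa)=(4,0)$), so the Imrich machinery is simply unavailable for your nine leftover parameter pairs, which include groups of order up to $2^{13}$. Writing that these are ``dispatched by direct computation in \texttt{magma}'' does not establish the lemma: you exhibit no connection sets for these groups, and ``find an $\Orr$ for a group of order $8192$'' is not a specified finite check. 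The paper closes exactly this hole with no computation by dropping the centrality requirement: it takes a subgroup ($V_1$ or $V_2$) of rank $2\ell+\kappa-2\ge 6$ inside $V$, on which $v_1x$ acts nontrivially, uses the connection set $[(V_i\setminus T_i)\setminus\{1\}]v_1x\cup\{v_2x\}$, and replaces Proposition~\ref{B-distinct} by a bespoke claim (that $v_2x$ is the unique vertex of $S$ all of whose outneighbours have no other inneighbour from $S$) followed by Corollary~\ref{cor-innbrs}, which, unlike Proposition~\ref{B-distinct}, does not require $v_1x$ to centralise the subgroup carrying the Imrich set. To complete your argument you must either handle the nine residual pairs by hand (for instance by switching to a larger, non-central subgroup as the paper does) or actually carry out and report the computation with explicit connection sets; as written the lemma is not proved for them.
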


\begin{proof}
We will use two different generating sets, according to whether or not $\kappa \ge 2$, or $\kappa \in \{0,1\}$.

If $\kappa \ge 2$, then let $$V_1:=\langle v_2, \ldots, v_\ell, w_2, \ldots, w_\ell, e_1, \ldots, e_{\kappa}\rangle$$ and let $T_1$ be the Imrich generating set for $V_1$ with respect to this generating tuple (this exists because the rank of $V_1$ is $2(\ell-1)+\kappa\ge 6$). 
If $\kappa\in\{0,1\}$, then $2\ell+\kappa \ge 8$ implies $\ell \ge 4$. In this case, let $$V_2:=\langle v_2,v_4w_4,v_3,v_2w_2,v_4,v_2w_3,v_5,\ldots, v_\ell,e_1,\ldots, e_\kappa\rangle$$  and let $T_2$ be the Imrich generating set for $V_2$ with respect to this generating tuple (this exists because the rank of $V_2$ is at least $6$ since the first $6$ elements are always in the tuple). 

Let
$$S:=[(V_i\setminus T_i)\setminus \{1\}]v_1x\cup \{v_2x\},$$
where $i=1$ if $\kappa \ge 2$ and $i=2$ if $\kappa \in\{0,1\}$.

Observe that the product of any two elements of $S$ has the form $vv_1xv'v_1x=vv_1(v')^xw_1\in V_1v_1w_1$ (where $v, v' \in V_i$), or $vv_1xv_2x=vv_1w_2\in V_1v_1$ (where $v \in V_i$), or $v_2xvv_1x=v_2v^xw_1\in V_1w_1$ (where $v \in V_i$), or $(v_2x)^2=v_2w_2$. Since none of these can be $1$, $S$ is the connection set for an oriented digraph.

\smallskip
\noindent\textsc{Claim:} $v_2x$ is the only vertex of $S$ for which all of its outneighbours have no other inneighbour from $S$.

\noindent

\noindent\textit{Proof of the Claim: }Observe that the outneighbours of $v_2x$ are all in $V_1v_1 \cup V_1$, while the outneighbours of any vertex of $V_iv_1x$ are in $V_1v_1w_1\cup V_1w_1$. Thus every outneighbour of $v_2x$ has a unique inneighbour from $S$. It remains to prove the uniqueness.

Let $vv_1x \in S$ be arbitrary, so that $v \in V_i$ but $v \notin T_i\cup\{1\}$. If $i=1$, then let $f_1:=e_1$ and $f_2:=e_2$; if $i=2$, then let $f_1:=v_2w_2$ and $f_2:=v_3w_3$.
We claim that $v \notin (T_i\cup\{1\})f_1\cap (T_i\cup\{1\})f_2$. Otherwise, $v=tf_1=t'f_2$ for some $t, t' \in T_i\cup \{1\}$ implies $tt'=f_1f_2$. Due to the structure of $T_2$, this has no solutions if $i=2$. When $i=1$, the only solutions are $\{t,t'\}=\{1,e_1e_2\}$ or $\{t,t'\}=\{e_1,e_2\}$. Thus $v \in \langle e_1,e_2\rangle \subset T_1\cup \{1\}$, a contradiction that completes the proof of the claim. 

Let $j \in \{1,2\}$ be such that $v\notin (T_i\cup\{1\})f_j$. Then $vf_j \notin T_i\cup \{1\}$, so $vf_jv_1x \in S$ and $(vf_jv_1x)^2=v_1w_1vv^x$ is an outneighbour of $vf_jv_1x$.
It is also an outneighbour of $vv_1x$ since  $(vv_1x)^2=v_1w_1vv^x$.~$_\blacksquare$

\smallskip

Let $\varphi \in \Aut(\Cay(G,S))_1$.  From the previous claim, $\varphi$ fixes $v_2x$.
This implies that $V_iv_1x \cap S$ is fixed setwise by $\varphi$. Now we can apply Corollary~\ref{cor-innbrs} to see that $\varphi$ fixes $v_1x$. This also implies that $(T_iv_1x)^\varphi=T_iv_1x$, and our choice of $T_i$ as the generating set for a GRR on $V_i$ implies that $V_iv_1x$ is fixed pointwise by $\varphi$, and therefore so is $V_i$. Thus $\varphi$ fixes pointwise the generating set $\{v_2x,v_1x\}\cup V_i$ of $G$.  By Lemma~\ref{Watkins-Nowitz}, $\Cay(G,S)$ is an $\Orr$.
\end{proof}

This shows that every group that arises in Proposition~\ref{prop:reduction}(b)(i) with $\ell \ge 2$ that has order at least $2^9$ has an ORR.

\subsection{The groups of Proposition~\ref{prop:reduction}(b)(ii).}

Let $G$ be a group that arises in Proposition~\ref{prop:reduction}(b)(ii). Then we have $G=\langle V, x\rangle$, where $V=\langle v_1, w_1, \ldots, v_\ell, w_\ell, e_1, \ldots, e_\kappa\rangle$ is an elementary abelian $2$-group, $x^2=e_1$, $e_i^x=e_i$ for every $i\in\{ 1,\ldots, \kappa\}$, and $v_i^x=w_i$ for every $i \in\{1,\ldots, \ell\}$. 

\begin{lemma}\label{biiproof}
Let $G$ be a group that arises in Proposition~$\ref{prop:reduction}(b)(ii)$. If $2\ell+\kappa \ge 7$, then $G$ has an $\Orr$.
\end{lemma}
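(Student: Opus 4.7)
My plan is to mirror the proof of Lemma~\ref{biproof}, adapted to the arithmetic $x^2 = e_1 \neq 1$. The crux is that every element of $V$ is an involution, so the connection set must live entirely inside $Vx$, and the identity $x^{-1} = e_1 x$ (consequence of $x^4 = 1$ together with the centrality of $e_1$) has to be used throughout.

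Take $V_i := \langle v_1, w_1, \ldots, v_\ell, w_\ell, e_2, \ldots, e_\kappa\rangle$, an $x$-invariant elementary abelian subgroup of $V$ of rank $k := 2\ell + \kappa - 1 \ge 6$ that crucially avoids $e_1$. Let $T_i$ be the Imrich generating set for $V_i$ with respect to the natural ordering of these generators, and set $S := (V_i x \setminus T_i x) \setminus \{x\}$. A direct computation gives $(vx)^{-1} = v^x e_1 x \in V_i e_1 x$ for every $v \in V_i$, and since $e_1 \notin V_i$ the cosets $V_i x$ and $V_i e_1 x$ are disjoint; hence $S \cap S^{-1} = \emptyset$. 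A counting argument (choose one factor in $V_i \setminus (T_i \cup \{1\})$ avoiding a bad set of size at most $2(|T_i|+1) = 4k+4 < 2^k$) yields $S \cdot S = V_i e_1$, and iterated products then reach all four cosets $V_i, V_i x, V_i e_1, V_i e_1 x$ of $V_i$ in $G$; so $\langle S \rangle = G$.

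To see $\Aut(\Cay(G,S))_1 = 1$, I would first borrow the inneighbour count from the proof of Proposition~\ref{B-distinct}, which carries over almost verbatim: $V_i e_1$ is exactly the set of vertices with at least $2^k - 4k - 4$ inneighbours in $S$, whereas vertices outside $V_i e_1$ have none (since $S \cdot S \subseteq V_i e_1$). Any $\varphi \in \Aut(\Cay(G,S))_1$ therefore fixes $V_i e_1$ setwise, and the coset four-cycle $V_i \to V_i x \to V_i e_1 \to V_i e_1 x \to V_i$ induced by outneighbours via $S$ forces each of the four cosets of $V_i$ to be fixed setwise. Corollary~\ref{cor-innbrs}, applied with $B := V_i$ and the coset $V_i x$, then yields $\varphi(x) = x$, and hence $T_i x$ is also fixed setwise.

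To upgrade setwise to pointwise fixation of $V_i x$, I compute for each $y = vx \in V_i x$ the ``missing outneighbourhood'' $M(y) := V_i e_1 \setminus N^+(y) = (T_i \cup \{1\}) v^x e_1$, a translate of $(T_i \cup \{1\}) e_1$ in the ambient elementary abelian group $V$. Since the Imrich subset $T_i \cup \{1\}$ has trivial translational stabiliser in $V_i$ (an easy case check on the explicit Imrich generators, valid for $k \ge 4$), the induced permutation $\tau$ of $V_i e_1$ must send translates of $T_i \cup \{1\}$ to translates of $T_i \cup \{1\}$; equivalently, $\tau \in \Aut(\Cay(V_i, T_i \cup \{1\})) = \Aut(\Cay(V_i, T_i)) = V_i$ (acting regularly by Imrich's GRR). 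Chasing this translation back through the $x$-twist $v \mapsto v^x$ (itself merely an automorphism of $V_i$, which only relabels the parameterisation) gives that $\varphi$ acts on $V_i x$ as a translation, and $\varphi(x) = x$ forces that translation to be trivial. Hence $V_i x$, and then $V_i$, is fixed pointwise, so Lemma~\ref{Watkins-Nowitz} with $\langle V_i, x\rangle = G$ finishes the proof. The most delicate step is this translate-preservation deduction; if it proves stubborn in an edge case, a safe fallback is to adjoin a single outlier such as $e_1 x$ to $S$ (its inverse $x$ is already excluded) and pin it down by a mutual-inneighbour uniqueness argument in the spirit of the Claim inside the proof of Lemma~\ref{biproof}.
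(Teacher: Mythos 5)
Your construction is genuinely different from the paper's: the paper simply takes $S:=Tx\cup\{x\}$ (the Imrich set itself, translated, plus $x$) and pins down $x$ directly via Lemma~\ref{mut-innbrs}, whereas you take the complementary set $(V_ix\setminus T_ix)\setminus\{x\}$ and run the heavier machinery of Proposition~\ref{B-distinct}/Corollary~\ref{cor-innbrs}. Your verification that $S\cap S^{-1}=\emptyset$, that $S$ generates $G$, that the four cosets of $V_i$ are fixed setwise, and that Corollary~\ref{cor-innbrs} forces $x^\varphi=x$ (the corollary indeed imposes no centralising hypothesis on $x$) all hold up.

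The genuine gap is in your final step. You compute correctly that the missing outneighbourhood of $vx$ in $V_ie_1$ is $(T_i\cup\{1\})v^xe_1$, so the permutation $\tau:=\varphi|_{V_ie_1}$ sends translates of $T_i\cup\{1\}$ to translates. But your claimed equivalence ``$\tau$ sends translates to translates $\iff$ $\tau\in\Aut(\Cay(V_i,T_i\cup\{1\}))$'' is false. Being a Cayley graph automorphism means $\tau\bigl((T_i\cup\{1\})u\bigr)=(T_i\cup\{1\})\tau(u)$, i.e.\ the \emph{same} permutation governs both the points and the centres of the translates. In your situation the permutation of the centres is (a conjugate by $v\mapsto v^x$ of) $\varphi|_{V_ix}$, which is a priori a different permutation living on a different coset; what you actually obtain is an automorphism of the bipartite incidence structure between $V_ix$ and $V_ie_1$ (equivalently, of a canonical-double-cover-type object), and the group of such pairs $(\alpha,\beta)$ with $\alpha\bigl((T_i\cup\{1\})c\bigr)=(T_i\cup\{1\})\beta(c)$ can be strictly larger than the automorphism group of $\Cay(V_i,T_i)$ — this is exactly the issue of graph ``stability,'' which the GRR property of the Imrich graph does not by itself control. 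So the deduction that $\varphi|_{V_ix}$ is a translation, hence trivial, does not follow as written. Your proposed fallback of adjoining $e_1x$ does not repair this: pinning down $e_1x$ only yields that $T_i\cup\{1\}\subseteq V_i$ is fixed setwise, and since $S\cap V_i=\emptyset$ there are no arcs inside $V_i$ to exploit afterwards. To close the argument you would need either a stability statement for the Imrich graphs or, more simply, the paper's route: with $S=Tx\cup\{x\}$ the vertex $x$ is isolated among $S$ by the mutual-inneighbour counts of Lemma~\ref{mut-innbrs}, after which the pointwise fixation of $V'x$ is extracted from the GRR property with far less superstructure in the way.
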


\begin{proof}
Let $(v_1, \ldots, v_\ell, w_1, \ldots, w_\ell, e_2, \ldots e_\kappa)$ be a generating tuple for  $$V':=\langle
v_1, \ldots, v_\ell, w_1, \ldots, w_\ell, e_2, \ldots e_\kappa
\rangle <V,$$ and let $T$ be the Imrich generating set for $V'$ with respect to this generating tuple (this exists because $V'$ has rank $2\ell+\kappa-1 \ge 6$). Let 
$$S:=Tx\cup\{x\}.$$

As the product of any two elements of $S$ is not $1$ (since it will be in $V'e_1$), $\Cay(G,S)$ is an oriented Cayley digraph.

By Lemma~\ref{mut-innbrs}, $x$ is the unique vertex of $S$ that has at least three mutual inneighbours with every other vertex of $S$. Since any automorphism $\varphi \in \Aut(\Cay(G,S))_1$ fixes $S$ setwise, we must have $x^\varphi=x$. Furthermore $(Tx)^\varphi=Tx$ and since $T$ is the generating set for a GRR on $V'$, any automorphism that fixes $x$ and fixes $Tx$ setwise must actually fix $\langle T\rangle x=V'x$ pointwise. Since $\varphi$ fixes every point of $\{x,V'x\}$ and $\langle x,V'x\rangle=G$, by Lemma~\ref{Watkins-Nowitz} we have $\varphi=1$ and this completes the proof.
\end{proof}

This shows that every group  that arises in Proposition~\ref{prop:reduction}(b)(ii) that has order at least $2^8$ has an ORR.

\subsection{The groups of Proposition~\ref{prop:reduction}(c).}

\begin{notation}\label{F3-notn}
Let $G$ be a group that arises in Proposition~\ref{prop:reduction}(c). Then $G$ has a maximal subgroup $D$ isomorphic to $D_4 \times C_2^\ell$ for some $\ell \in \mathbb N$. Studying the initial description of the groups in Proposition~\ref{prop:reduction}, we see that we may assume that
 $G=\langle A,g,n\rangle$, where $A\cong C_4 \times C_2^k$ is generated by $a_1,\ldots, a_{k+1}$ with $o(a_1)=4$ and $o(a_i)=2$ for $ i\in\{2,\ldots, k+1\}$, and $g$ acts by conjugation on $A$ inverting every $a_i$, $g^2=1$, $n^g=n^{-1}$, $n$ normalises $A$ and $n^2 \in A$. Furthermore, if $n$ centralises $A$, then $N:=\langle A,n\rangle$ is abelian and $G=\langle N,g\rangle$ is generalised dihedral. Thus, if we assume that $G$ is not generalised dihedral, $n$ does not centralise $A$.
\end{notation}


\begin{lemma}\label{not-centralised}
Let $G$ be as in Notation $\ref{F3-notn}$. If $G$ is not generalised dihedral, then
there exists $a \in A$ such that $o(a)>2$ and $a$ is not centralised by $n$. 
\end{lemma}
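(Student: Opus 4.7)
The plan is to argue by contradiction: I assume that every element $a \in A$ with $o(a) > 2$ is centralised by $n$, and derive that $n$ centralises all of $A$, which (by the final sentence of Notation~\ref{F3-notn}) forces $G$ to be generalised dihedral, contrary to hypothesis.

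The first step is to describe the elements of $A$ of order greater than $2$. Since $A = \langle a_1\rangle \times \langle a_2,\ldots,a_{k+1}\rangle \cong C_4 \times C_2^k$, every element of $A$ can be written uniquely as $a_1^i b$ with $i\in\{0,1,2,3\}$ and $b\in B:=\langle a_2,\ldots,a_{k+1}\rangle$. Such an element has order greater than $2$ if and only if $i\in\{1,3\}$. In particular, for each $j\in\{2,\ldots,k+1\}$, the element $a_1 a_j$ has order $4$, since $(a_1 a_j)^2 = a_1^2 \neq 1$ and $(a_1 a_j)^4 = 1$. Of course, $a_1$ itself also has order $4$.

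Assuming $n$ centralises every element of $A$ of order greater than $2$, $n$ then centralises $a_1$ and $a_1 a_j$ for every $j\in\{2,\ldots,k+1\}$. Consequently $n$ centralises $a_j = a_1^{-1}(a_1 a_j)$ for every such $j$. Thus $n$ centralises the generating set $\{a_1,a_2,\ldots,a_{k+1}\}$ of $A$, and therefore $n$ centralises $A$.

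The final step is to recall the remark in Notation~\ref{F3-notn}: if $n$ centralises $A$, then $N=\langle A,n\rangle$ is abelian and $g$ inverts both $A$ (by hypothesis) and $n$ (since $n^g = n^{-1}$), so $g$ inverts every element of the abelian group $N$; hence $G = \langle N,g\rangle$ is generalised dihedral. This contradicts our standing assumption, completing the proof. There is no real obstacle here; the argument is essentially an observation about the structure of $C_4 \times C_2^k$, namely that $B$ is generated by the differences $a_1^{-1}\cdot (a_1 a_j)$ of elements of order $4$.
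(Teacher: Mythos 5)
Your proof is correct and rests on the same key observation as the paper's: every involution in $A$ is a quotient of elements of order $4$ (equivalently, multiplying an involution by $a_1$ gives an element of order $4$), so if $n$ centralised every element of order greater than $2$ it would centralise all of $A$, forcing $G$ to be generalised dihedral. The paper runs the same idea in contrapositive form, directly exhibiting $a_1b$ as the desired element for a non-centralised involution $b$; the difference is purely presentational.
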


\begin{proof} 
Assume that $G$ is not generalised dihedral. Therefore $n$ must not centralise $A$. Also, there is some $a_1 \in A$ with $o(a_1)>2$.
If $a_1$ is not centralised by $n$ then taking $a:=a_1$ we are done, so we may assume that $n$ centralises every element of $A$ of order greater than $2$. Since $n$ does not centralise $A$, there is some $b \in A$ such that $b$ is not centralised by $n$. Thus $o(b)=2$.  Let $c:=b^n$, so $c \neq b$ has order $2$.  Now, $a_1b$ has order $o(a_1)>2$, and $(a_1b)^n=a_1b^n=a_1c \neq a_1b$, so taking $a:=a_1b$ satisfies our conclusion.
\end{proof}

\begin{lemma}\label{not-inverted} 
Let $G$ be as in Notation $\ref{F3-notn}$.
If $o(n)=2$ and $G$ is not generalised dihedral, then there is some element $a \in A$ such that $o(a)>2$ and $a$ is not inverted by $n$.
\end{lemma}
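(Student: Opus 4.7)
The plan is to argue by contradiction: suppose every element of $A$ of order $>2$ is inverted by $n$, and deduce that $G$ is generalised dihedral.

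First I would exploit the structure $A = \langle a_1\rangle \times B$ where $B := \langle a_2,\ldots,a_{k+1}\rangle$ is elementary abelian of exponent $2$. The elements of $A$ of order $>2$ are exactly those of the form $a_1 b$ or $a_1^{-1} b$ with $b \in B$. From $(a_1 b)^n = (a_1 b)^{-1} = a_1^{-1} b$ (the last equality because $b^2=1$), setting $b=1$ gives $a_1^n = a_1^{-1}$, and then the general case forces $b^n = b$ for every $b \in B$. Because every $b \in B$ satisfies $b^{-1} = b$, a quick check shows that for every $i$ and every $b \in B$, $(a_1^i b)^n = a_1^{-i} b = (a_1^i b)^{-1}$. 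Hence $n$ in fact inverts \emph{every} element of $A$, not just those of order $>2$.

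Next, I would introduce $m := gn$. Since both $g$ and $n$ invert every element of $A$, the product $m$ centralises $A$. Using $n^g = n^{-1}$ (which rearranges to $ng = gn^{-1}$), a direct calculation gives
\[
m^2 = gngn = g(ng)n = g(gn^{-1})n = g^2 = 1,
\]
so $m$ is an involution (or trivial). Set $N' := \langle A, m\rangle$; since $m$ centralises $A$ and $m^2=1$, $N'$ is abelian. Working in the Klein four-group $G/A$, where $gA$ and $nA$ are distinct non-trivial cosets, one sees $gnA \ne A$, so $m \notin A$; hence $|N':A|=2$ and $|G:N'|=2$.

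To finish, I would verify that $n$ is an involution that inverts every element of $N'$, so that $G = \langle N', n\rangle$ is generalised dihedral over $N'$, contradicting the hypothesis. That $n \notin N'$ follows because $n \in \langle A, gn\rangle$ would force either $n \in A$ or $g \in A$, both excluded; hence $G = \langle A,g,n\rangle = \langle N', n\rangle$. For the inversion, $n$ already inverts $A$, and this is exactly where the hypothesis $o(n)=2$ enters: using $n^2 = 1$, one computes $nm = n(gn) = (ng)n = (gn^{-1})n = g$ and $mn = (gn)n = gn^2 = g$, so $n$ centralises $m$. Combined with $m$ centralising $A$ and $m^2=1$, for any $am^i \in N'$ we get $(am^i)^n = a^{-1}m^i = (am^i)^{-1}$, as required.

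The main subtlety is ensuring the hypothesis $o(n)=2$ is used in an essential place; it enters precisely to guarantee $n$ and $m$ commute (without it we only know $m^2=1$). Once that is in hand, every other step is a straightforward computation in $A$ and in $G/A \cong C_2 \times C_2$.
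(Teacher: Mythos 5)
Your proof is correct and follows essentially the same route as the paper's: both first upgrade the contradiction hypothesis to ``$n$ inverts all of $A$'' by testing elements $a_1b$ with $b$ of order $2$, and then observe that $ng=gn$ centralises $A$, so that $\langle A, ng\rangle$ is an abelian index-$2$ subgroup of $G$ inverted by an involution, making $G$ generalised dihedral. The only (immaterial) difference is that you exhibit $n$ as the inverting involution over $\langle A, gn\rangle$, which requires your extra check that $n$ and $gn$ commute, whereas the paper uses $g$ and verifies directly that $g$ inverts the involution $ng$.
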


\begin{proof}
Assume  $o(n)=2$. Let $a_1$ be an element of $A$ with $o(a_1)>2$.
If $a_1$ is not inverted by $n$ then taking $a:=a_1$ we are done, so we may assume that $n$ inverts every element of $A$ of order greater than $2$. Consider $a_1b$ for any $b$ of order 2 in $A$. As $o(a_1b)=o(a_1)>2$, we have $a_1^{-1}b=(a_1b)^{-1}=(a_1b)^n=a_1^nb^n=a_1^{-1}b^n$, and $b^n=b$. Thus $n$ centralises every element of order $2$, and  this implies that every element in $A$ is inverted by $n$. Hence $ng$ centralises $A$ and $\langle A,ng\rangle$ is abelian. Since the action of $g$ by conjugation inverts $ng$ (because $(ng)^g=n^gg=n^{-1}g=ng$)  and each element of $A$, we deduce that $G$ is a generalised dihedral group over $\langle A,ng\rangle$, a contradiction.
\end{proof}

We now have the key points for dealing with these groups. Our final lemma in this section shows that all sufficiently large groups that arise in Proposition~\ref{prop:reduction}(c) are either generalised dihedral or have $\Orr$s.

\begin{lemma}\label{cproof}
Let $G$ be as in Notation~$\ref{F3-notn}$, with $k \ge 6$. Then either $G$ is generalised dihedral, or $G$ admits an $\Orr$.
\end{lemma}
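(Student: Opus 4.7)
My plan is to apply Proposition~\ref{B-distinct} with the elementary abelian subgroup $B:=\langle a_2,\ldots,a_{k+1}\rangle$ (of rank $k\ge 6$) and with $x:=a_1$. Since $A$ is abelian, $x$ centralises $B$; since $o(a_1)=4$ and $a_1^2\notin B$, every element of the coset $Bx=Ba_1$ has order $4$ and $(Bx)\cap(Bx)^{-1}=\emptyset$. Moreover $\langle B,x\rangle=A$ has index $4$ in $G$. After selecting an Imrich generating set $T$ for $B$ with respect to the generating tuple $(a_2,\ldots,a_{k+1})$, the task is to choose a small set $X\subseteq G\setminus A$ so that $S:=[(Bx\setminus Tx)\setminus\{x\}]\cup X$ is the connection set of an $\Orr$ on $G$.

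Assume $G$ is not generalised dihedral. Using $ga=a^{-1}g$ and $gn=n^{-1}g$, one verifies that every element of $Ag$ is an involution (so $X$ cannot meet $Ag$), whereas $ang\in Ang$ is an involution if and only if $n$ centralises $a$, and $an\in An$ is an involution if and only if a specific relation between $a$, $a^{n^{-1}}$ and $n^{-2}$ holds. By Lemma~\ref{not-centralised} (and Lemma~\ref{not-inverted} when $o(n)=2$), after relabelling the generators of $A$ I may assume $a_1^n\ne a_1$ (and $a_1^n\ne a_1^{-1}$ when $o(n)=2$), and both cosets $An$ and $Ang$ then contain non-involutions. I plan to take $X=\{u_1,u_2\}$ with $u_1\in An$ and $u_2\in Ang$ non-involutions, chosen moreover so that $u_1^2\ne u_2^2$. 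The set of possible values of $u_2^2$ is the image of a homomorphism $A\to A$ whose kernel is $C_A(n)$, of index at least $2$ in $A$ since $G$ is not generalised dihedral, and similarly there is flexibility in the value of $u_1^2$; this typically suffices to ensure $u_1^2\ne u_2^2$.

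Verifying $S\cap S^{-1}=\emptyset$ (the $A$-part lies in $Ba_1$ and is disjoint from $Ba_1^{-1}$; the elements of $X$ are non-involutions lying in distinct cosets of $A$) and $\langle S\rangle=G$ is routine. Proposition~\ref{B-distinct} then yields that every $\varphi\in\Aut(\Cay(G,S))_1$ fixes $B$ and $x$ pointwise, hence fixes $A=\langle B,x\rangle$ pointwise, and therefore permutes $X=\{u_1,u_2\}$. If $\varphi$ swapped $u_1$ with $u_2$, then the arc $(u_1,u_1^2)$ of $\Cay(G,S)$ (which exists because $u_1\in S$) would be mapped to $(u_2,u_1^2)$, requiring $u_1^2u_2^{-1}\in S$. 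Since $u_1^2\in A$ and $u_2^{-1}\in Ang$, this element lies in $Ang$; as $S\cap Ang=\{u_2\}$, we would obtain $u_1^2=u_2^2$, contradicting the choice of $X$. Hence $\varphi$ fixes both $u_1$ and $u_2$, and Lemma~\ref{Watkins-Nowitz} gives $\varphi=1$, so that $\Cay(G,S)$ is an $\Orr$.

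The main obstacle is the degenerate case in which no admissible pair $(u_1,u_2)$ satisfies $u_1^2\ne u_2^2$; this can occur, for example, when $n$ acts on $A$ by inversion and $n^2=a_1^2$, since then every non-involution in $G\setminus A$ squares to $a_1^2$. In this exceptional case I would enlarge $X$ to $\{u_1,u_3,u_2\}$, taking two elements $u_1=an$ and $u_3=a'n$ in $An$ with $a,a'\in A$ in distinct cosets of $\Omega_1(A)$ in $A$, together with $u_2\in Ang$. The outneighbours of $u_2$ in $A$ then form a singleton, while those of $u_1$ and of $u_3$ form $2$-element sets, so a counting argument (using that $\varphi$ fixes $A$ pointwise) rules out $\varphi(u_2)\in\{u_1,u_3\}$; and a direct comparison of the outneighbour sets of $u_1$ and $u_3$, both of the form $\{u_i^2,u_ju_i\}$, forbids the remaining swap $\varphi(u_1)=u_3$ by showing that it would force $a$ and $a'$ to share an $\Omega_1(A)$-coset. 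The conclusion then follows as in the main case.
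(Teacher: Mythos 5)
Your overall strategy is sound and genuinely different in organisation from the paper's: you keep the coset part of the connection set anchored at $a_1$ and try to finish with just two extra generators $u_1\in An$, $u_2\in Ang$ distinguished by their squares, whereas the paper anchors the coset at an element $a$ of order $4$ not centralised by $n$ (Lemma~\ref{not-centralised}) and splits into four cases with between two and five extra generators, each case requiring its own outneighbour computation. Your main-case argument is correct as far as it goes: $S\cap S^{-1}=\emptyset$ checks out, Proposition~\ref{B-distinct} applies, and the swap $u_1\leftrightarrow u_2$ really does force $u_1^2=u_2^2$ because $S\cap Ang=\{u_2\}$. If completed, your route would arguably be cleaner than the paper's.

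The genuine gap is in the treatment of the degenerate situation. Writing $\psi(a)=aa^{n}$ and $\phi(a)=a(a^{n})^{-1}$ (note $a^{n^{-1}}=a^n$ since $n^2\in A$ is central in $A$), the achievable squares are $u_1^2\in(\mathrm{Im}(\psi)n^2)\setminus\{1\}$ and $u_2^2\in\mathrm{Im}(\phi)\setminus\{1\}$, and the two-element construction fails exactly when both sets are the same singleton. You exhibit only one instance of this ($n$ inverting $A$ with $n^2=a_1^2$), flag it with ``this typically suffices'' and ``for example,'' and tailor the three-element repair to that instance. But there are other degenerate configurations -- for instance $o(n)=2$ with $\mathrm{Im}(\psi)=\mathrm{Im}(\phi)=\langle a_1^2\rangle$, realised e.g.\ by $b_2^n=b_2a_1^2$ with all other generators of $A$ centralised, which is not generalised dihedral -- so the case analysis is incomplete as stated. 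Moreover, even granting that the three-element repair is the right fix everywhere, it needs two verifications you do not supply: (i) that in every degenerate case the non-involutions of $An$ (which form either all of $An$ or a coset of an index-$2$ subgroup) meet both cosets of $\Omega_1(A)$, so that $a$ and $a'$ as required actually exist; and (ii) that the swap $u_1\leftrightarrow u_3$ forces $a'a^{-1}\in\cent An\cap\ker\psi\subseteq\Omega_1(A)$ in general, not just when $n$ inverts $A$. Both statements are true and provable with a short computation, but they are precisely where the content of the lemma lives -- the paper's four-way case split exists to handle exactly these configurations -- so the proposal cannot be accepted as a proof until this analysis is carried out.
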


\begin{proof}
Suppose that $G$ is not generalised dihedral; let $B:=\langle a_2, \ldots, a_{k+1}\rangle$, and let $T$ be the Imrich generating set for $B$ with respect to some generating tuple. By Lemma~\ref{not-centralised}, there exists $a\in A$ with $o(a)>2$ and $a^n\ne a$. Let 
\[
S:=\begin{cases}
[(Ba\setminus Ta)\setminus\{a\}] \cup \{n^{-1},an^{-1}g,an\}&\textrm{when }a^n\ne n^{-2}a^{-1} \textrm{ and }o(n)\ne 2;\\
[(Ba\setminus Ta)\setminus \{a\}] \cup \{an^{-1}g,an\}&\textrm{when }a^n\ne a^{-1}\textrm{ and }o(n)=2;\\
[(Ba\setminus Ta)\setminus \{a\}] \cup \{an^{-1}g,n\}&\textrm{when }(a')^n=n^{-2}(a')^{-1} \textrm{ for every }a'\in A \textrm{ with }o(a')=4.
\end{cases}
\]
Observe that the three cases above do not cover all possibilies: in the remaining case  every element $a'$ of order $4$ in $A$ is either centralised by $n$, or has $(a')^n=n^{-2}(a')^{-1}$, and they are not all in the second category.  In the next paragraph, we study this latter possibility.

So there is $a \in A$ as in Lemma~\ref{not-centralised}, and $a^n=n^{-2}a^{-1}$. Notice that $a\ne a^n=n^{-2}a^{-1} $ implies that $n^2 \neq a^2$ in this case.  

\smallskip

\noindent \textsc{Claim: }There exist three elements $b_1,b_2,b_3\in B$ such that $ab_1,ab_1b_2,ab_1b_3$ are distinct and centralised by $n$.

\smallskip

\noindent\textit{Proof of claim: }Since not all elements of order $4$ of $A$ are in the second category, there exists an element of order $4$ centralised by $n$. As $A=\langle a\rangle\times B$, we may assume that this element is of the form $ab_1$, for some $b_1\in B\setminus\{1\}$. If $|\cent B n|>2$, then we may choose two distinct elements $b_2,b_3\in \cent B n\setminus\{1\}$, and now $n$ centralises the three distinct elements $ab_1,ab_1b_2,ab_1b_3$. Assume $|\cent B n|\le 2$.

Now, $B_0:=\langle a^2,B\rangle$ is characteristic in $A$ because it consists of all elements of order $2$ of $A$, and hence $B_0$ is $n$-invariant. Clearly, $|\cent {B_0}n|\le 2|\cent B n|\le 4$. As $n^2\in A$ centralises $A$, the mapping $f:B_0\to \cent {B_0}n$ defined by $x\mapsto xx^n$ is a homomorphism with kernel $\cent {B_0}n$. Therefore, from the first homomorphism theorem, $|B_0/\cent {B_0}n|\le |\cent {B_0}n|$ and $|B_0|\le |\cent {B_0}n|^2=16$. However, $|B_0|=2^{k+1}\ge 2^{6+1}$, a contradiction.~$_\blacksquare$

\smallskip

Let $b_1,b_2,b_3\in B$ be as in the previous claim and let
$$S:=[(Ba\setminus Ta)\setminus \{a\}] \cup \{an^{-1}g,ab_1n,ab_1b_2n,ab_1b_3n\}.$$


We claim that, in all four cases, $\Gamma:=\Cay(G,S)$ is an ORR for $G$.
First we show that $S$ is asymmetric and all of its elements have order greater than $2$. Certainly in every case the elements of $S\cap A$ have order $4$ and do not contain any inverse-closed pair. Any products that are not in $A$ will clearly not be $1$. The remaining pairwise products are $n^{-1}an=a^n\neq 1$; $ann^{-1}=a \neq 1$; $n^{-2}$ or $n^2$ (notice that, if $o(n)=2$, then Lemma~\ref{not-inverted} implies that $n \notin S$); $(an^{-1}g)^2=a(a^n)^{-1} \neq 1$ by our choice of $a$; $(an)^2=aa^nn^2\neq 1$ by our choice of $a$ in the cases where $an \in S$; and for every $b_i,b_j \in \{1,b_1,b_2,b_3\}$, we have $ab_1b_inab_1b_jn=a^2b_ib_jn^2\neq 1$ since $n^2 \neq a^2b_ib_j$. Thus $\Gamma$ is an oriented digraph.

Let $\varphi \in \Aut(\Gamma)_1$. By Proposition~\ref{B-distinct}, $a^\varphi=a$ and $\varphi$ fixes every element of $B$ pointwise. Therefore $\varphi$ fixes every element of $\langle a,B\rangle=A$ pointwise.

Now if $S$ has the first of the four possible forms, then $an^{-1}g$ is the only vertex of $\{an^{-1}g,an,n^{-1}\}$ that has only one outneighbour (via these three elements) in $A$, so $(an^{-1}g)^\varphi=an^{-1}g$. The outneighbours of $n^{-1}$ via $\{n^{-1},an\}$ are $n^{-2}$ and $a$, while the outneighbours of $an$ are $a^n$ and $aa^nn^2$. We know that $a^n \neq a$, and $a^n=n^{-2}$ implies $n^{-2}=a$, but then $a^n=a$, a contradiction. Thus $an$ is unique in having $a^n$ as an outneighbour, so $(an)^\varphi=an$. This completes the proof since $\varphi$ fixes every element of the generating set $\{an^{-1}g,an\}\cup A$ of $G$.

If $S$ has the second of the four possible forms, then $n^2=1$ and the outneighbours of $an^{-1}g$ via $an^{-1}g$ and $an$ are $a(a^n)^{-1}$ and $anan^{-1}g=aa^ng$, while the outneighbours of $an$ are $aa^n$ and $an^{-1}gan=a(a^n)^{-1}g$. Thus $an$ is the only one of these vertices that has $a(a^n) \in A$ as an outneighbour, so $(an)^\varphi=an$ and $(an^{-1}g)^\varphi=an^{-1}g$. This completes the proof since $\varphi$ fixes every element of the generating set $\{an,an^{-1}g\}\cup A$ of $G$.

If $S$ has the third of the four possible forms, then $(a')^n=n^{-2}(a')^{-1}$ for every $a'$ of order $4$ in $A$. The pairwise products of $\{an^{-1}g,n\}$ are $a(a^n)^{-1}=a(n^{-2}a^{-1})^{-1}=n^2a^2$, $an^{-2}g$, $n^{2}$, and $a^ng$. Since $a^2\neq 1$, $an^{-1}g$ and $n$ do not have the same outneighbours, so $n^\varphi=n$ and $(an^{-1}g)^\varphi=an^{-1}g$. This completes the proof since $\varphi$ fixes every element of the generating set $\{n,an^{-1}g\}\cup A$ of $G$.

Finally, if $S$ has the fourth of the four possible forms, then $an^{-1}g$ is the only vertex of $S\setminus Ba$ that has only one outneighbour in $A$, so $(an^{-1}g)^\varphi=an^{-1}g$. Of the remaining three vertices of $S\setminus Ba$, $ab_1n$ is the only one that has both $a^2b_2n^2$ and $a^2b_3n^2$ as outneighbours, so $(ab_1n)^\varphi=ab_1n$. This completes the proof since $\varphi$ fixes every element of the generating set $\{an^{-1}g,ab_1n\}\cup A$ of $G$.
\end{proof}

This shows that every group that arises in Proposition~\ref{prop:reduction}(c) that has order at least $2^{10}$ has an ORR.

\section{The groups in Theorem~$\ref{main}$~(iii)}\label{s:iii}

We discuss in some detail the work in~\cite{HeMa} and establish some notation that we use for the rest of this section.  Let $G$, $N$, $g$ and $n_0$ be as in Theorem~\ref{main}~{\bf (iii)}. Thus $|G:N|=2$, $g^2=1$, $N=H\cup n_0H$ where $H:=\{n\in N\mid n^g=n^{-1}\}$ (note that $H$ is a set and not necessarily a group), $|H|=|N|/2$ and $N$ has no automorphisms inverting more than half of its elements (according to Theorem~\ref{main}, $N$ is a group described in the Hegarty and MacHale paper, and hence $N$ admits an automorphism inverting precisely half of its elements, namely conjugation via $g$, and no automorphisms inverting more than half of its elements). We observe that our assumption about the existence and properties of $n_0$ is not amongst the assumptions of Hegarty and MacHale, and we will be able to use this assumption to eliminate some of the groups in their classification.

The classification of Hegarty and MacHale is very satisfactory, but not very easy to use in our application. These groups fall into ten isoclinism classes and, for each class, the authors give a very explicit description of a stem group in the class. (We denote by $X'$ the derived subgroup and by $\Zent X$ the centre of the group $X$.) We recall that the groups $X$ and $Y$ are isoclinic if there exist two group automorphisms $\varphi:X'\to Y'$ and $\psi:X/\Zent X\to Y/\Zent Y$ with
$$[x_1\Zent X,x_2\Zent X]^\varphi=[(x_1\Zent X)^\psi,(x_2 \Zent X)^\psi],\qquad\textrm{for every }x_1,x_2\in X.$$
For instance, the dihedral group of order $8$ and the quaternion group of order $8$ are isoclinic.  Being isoclinic is an equivalence relation coarser than the equivalence relation determined by the notion of group isomorphism. This means that finite groups are subdivided into isoclinism classes. Two groups in the same isoclinism class may not have the same order, for instance, $X$ and $X\times Z$ are isoclinic for every finite group $X$ and for every abelian group $Z$. It is well-known and also easy to prove that, for every group $X$, in the isoclinism class of $X$ there exists a group $Y$ with $\Zent Y\le Y'$. A group satisfying $\Zent Y\le Y'$ is said to be \textit{stem} group. The definition of isoclinism yields that the stem groups are precisely the groups of smallest possible order within their isoclinism class. It is quite unfortunate that two stem groups may be isoclinic but not necessarily isomorphic: consider again the example of the dihedral and the quaternion group of order $8$.

We can now explain in some detail the work of Hegarty and MacHale. (Just for this paragraph, we say that $X$ is half-inverting, if $X$ has an automorphism inverting half of its elements and no automorphisms inverting more than half of its elements.) As a by-product of their main theorem, they prove that if $X$ and $Y$ are isoclinic and $X$ is half-inverting, then $Y$ is half-inverting. Moreover, Hegarty and MacHale prove that half-inverting groups fall into $10$ distinct isoclinism classes. To describe these $10$ isoclinism classes, Hegarty and MacHale exhibit $10$ (non-isoclinic) stem groups: two of order $32$, six of order $64$ and two of order $128$. We  emphasise once again that this does not mean that there are only two stem groups of order $128$  that are half-inverting. (A computation with the computer algebra system \texttt{magma}~\cite{magma} shows that there are $5$, $55$ and $251$ half-inverting stem groups of order $32$, $64$ and $128$, respectively, up to isomorphism.)

Now that we have explained some details required for understanding the classification of Hegarty and MacHale, we can proceed to deal with the groups that arise in our context. 

\begin{prop}\label{iiiproof}
Let $G$ be a group that arises in Theorem~$\ref{main}$~{\bf (iii)}. Then $G$ admits an $\Orr$ except possibly if $N$ has an abelian subgroup $A$ with $|N:A|=4$, and $A$ is either isomorphic to $C_4 \times C_2^\ell$ for some $\ell \le 5$, or $A$ is elementary abelian of rank at most $7$.
\end{prop}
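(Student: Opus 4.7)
My plan is to invoke the Hegarty--MacHale classification: the group $N$ lies in one of ten isoclinism classes, each represented by an explicit stem group of order $32$, $64$ or $128$. For each class I would identify a characteristic abelian subgroup $A \le N$ of index $1$, $2$, or $4$; since $A$ is characteristic in $N \trianglelefteq G$, it is normal in $G$. By enlarging $A$ if necessary (for instance to contain the Frattini subgroup $\Phi(N)$), I would arrange that $G/A$ is elementary abelian of order at most $8$, which is the setting required by Lemma~\ref{l:1}.

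With such an $A$ in hand, I would build an $\Orr$ for $G$ by combining a suitable connection set $T \subseteq A$ with generators $a_1, \ldots, a_\kappa$ of $G/A$ (so $\kappa = \log_2|G:A| \le 3$) satisfying the squared-centralising and pairwise non-centralising conditions of Lemma~\ref{l:1}. When $A$ is neither elementary abelian nor of the form $C_4 \times C_2^{k-2}$, Lemma~\ref{abelian-orr} supplies $T$ directly, together with weak connectivity of the induced subgraph on $T$. When $A$ is elementary abelian of rank at least $6$, or $A \cong C_4 \times C_2^\ell$ with $\ell \ge 5$ (in which case I would pass to the subgroup of involutions, an elementary abelian group of rank $\ell+1 \ge 6$), I would instead employ the Imrich generating set of Lemma~\ref{Imrich-2-gps} on a rank-$6$ elementary abelian $B \le A$ and invoke Proposition~\ref{B-distinct} to force any automorphism fixing $1$ to act trivially on $\langle B, x\rangle$ for a centralising element $x$. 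The remaining generators in $G \setminus \langle B, x\rangle$ (at most $17$ of them, by Proposition~\ref{B-distinct}) would then be chosen so that each one is distinguishable by some local graph property such as the number of outneighbours in a prescribed coset, echoing the technique used in the proof of Lemma~\ref{cproof}.

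The main obstacle, as for the groups in Theorem~\ref{main}~\textbf{(ii)}, is verifying the hypotheses of Lemma~\ref{l:1} in the index-$4$ case, where $\kappa=3$ and we need three cosets of $A$ in $G$ represented by non-involutions $a_1, a_2, a_3$ with $a_i^2$ central in $A$ and, for all $i\ne j$, either $a_i^2 \ne a_j^2$ or $a_ia_j$ not centralising $A$. The key leverage will be the Theorem~\ref{main}~\textbf{(iii)} hypothesis that conjugation by $g$ inverts exactly half of $N$ combined with the non-generalised-dihedral assumption on $G$: this prevents too many cosets of $A$ from acting trivially or by inversion on $A$, forcing the existence of the required $a_i$'s. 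This argument breaks down exactly when $A$ is too small to simultaneously accommodate a rank-$6$ Imrich generating set and enough further structure to distinguish the three generators of $G/A$, that is, precisely when $|N:A|=4$ and either $A$ is elementary abelian of rank at most $7$ or $A \cong C_4 \times C_2^\ell$ with $\ell \le 5$; these bounded cases will be left to a separate (likely computational) analysis.
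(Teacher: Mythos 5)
Your overall strategy coincides with the paper's: pass to an abelian $A\le N$ of index $4$ that is normal in $G$ with $G/A$ elementary abelian of order $8$, apply Lemma~\ref{abelian-orr} together with Lemma~\ref{l:1} in the generic case, and switch to Imrich generating sets with Proposition~\ref{B-distinct} when $A$ is elementary abelian or isomorphic to $C_4\times C_2^\ell$. The gaps lie exactly where the work is. First, the structural setup is never established. The paper does not merely cite the Hegarty--MacHale classification; it extracts from the proof of their Lemma~$1$ an abelian $A$ of index $4$ in $N$ that is inverted elementwise by $g$, splits into their Case~I and Case~II according to which cosets of $A$ contain $g$-inverted elements, and uses the hypothesis from Theorem~\ref{main}~{\bf(iii)} that $N=H\cup n_0H$ to eliminate Case~I outright and, in Case~II, to force $\cent A{x_3}=\cent A{x_4}=\Zent N$ with $|A:\Zent N|=2$ and $G/A$ elementary abelian of order $8$. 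Your proposal never invokes the $n_0$ condition, yet without it one can rule out neither Case~I nor derive the centraliser structure on which everything downstream depends. Second, the verification of the second hypothesis of Lemma~\ref{l:1} --- that $a_ia_j$ fails to centralise $A$ for $i\neq j$ --- is asserted rather than proved; the paper's argument that $\cent GA=A$ hinges on the observations that, in the relevant case, $\Zent N$ is not elementary abelian (because $A$ has no elementary abelian subgroup of index $2$) and that $g$ inverts $\Zent N$, neither of which appears in your sketch.

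There is also a concrete error in your treatment of $A\cong C_4\times C_2^\ell$: you propose to work with the full subgroup of involutions $B$ of rank $\ell+1$. But Proposition~\ref{B-distinct} requires the connection set to meet a coset $Bx$ in $(Bx\setminus Tx)\setminus\{x\}$, and if $x\in A$ has order $4$ then $x^2\in B$, so $(bx)^{-1}=(bx^2)x\in Bx$ for every $b\in B$: the coset is inverse-closed, inversion acts on it without fixed points, and a subset of size $2^{\ell+1}-2\ell-4>2^{\ell}$ must contain an inverse pair, so the resulting Cayley digraph is not oriented. The paper avoids this by choosing $B\le A$ elementary abelian of rank $\ell$ \emph{not containing} the square $a_0^2$, so that every element of $Ba_0$ has order $4$ and no two are mutually inverse. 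A similar unaddressed constraint arises in the elementary abelian case: the centralising hypothesis of Proposition~\ref{B-distinct} forces $B$ into the centraliser of the chosen coset representative, which in the paper's construction means $B$ sits inside $\Zent N$ (of rank $\ell-1$) and must avoid the commutator $d=x_4^{-1}x_3^{-1}x_4x_3$; this is precisely why the argument requires rank $\ell\ge 8$ rather than $\ge 6$, a point your plan to use ``a rank-$6$ elementary abelian $B\le A$'' does not account for.
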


\begin{proof}
  By Theorem~\ref{main}~{\bf (iii)}, $G$ is a $2$-group with a normal subgroup $N$, $|G:N|=2$, and elements $g \in G\setminus N$ and $n_0 \in N$ with $g^2=1$; the action of $g$ by conjugation on $N$ inverts precisely half of the elements of $N$; and $N=H \cup n_0H$, where $H:=\{n \in N \mid n^g=n^{-1}\}$, and $|H|=|N|/2$.

From~\cite[Lemma~$1$]{HeMa} and its proof (see also the last paragraph of~\cite[page~$132$]{HeMa}), $N$ has an abelian subgroup $A$ with $|N:A|=4$ and with $a^g=a^{-1}$ for each $a\in A$. Some more information on the interaction between $A$ and $g$ and $N$ is available by reading the proof of Lemma~$1$ in~\cite{HeMa} (again, see also the last paragraph of page $132$ in~\cite{HeMa}). Let $1,x_2,x_3,x_4$  be left coset representatives for $A$ in $N$; thus $N=A\cup x_2A\cup x_3A\cup x_4A$. According to Hegarty and MacHale, there are only two cases:

\smallskip 

\noindent\textsc{Case I: }$g$ inverts some element in each coset of $A$ in $N$;

\smallskip

\noindent\textsc{Case II: }$g$ inverts some element in the cosets $A,x_3A$ and $x_4A$, and $g$ inverts no elements in the coset $x_2A$.

\smallskip

For the benefit of the reader, we have used the same subdivision into cases here, as the subdivision used in~\cite{HeMa}. Replacing $x_2,x_3,x_4$ by suitable coset representatives we may assume that $g$ inverts $x_3$ and $x_4$ and that $g$ also inverts $x_2$ in \textsc{Case I}. Let $x\in \{x_2,x_3,x_4\}$ in \textsc{Case I} and $x\in \{x_3,x_4\}$ in \textsc{Case II}. Let $y\in H\cap xA$. Then, $y=xa$ for some $a\in A$ and  $a^{-1}x^{-1}=(xa)^{-1}=y^{-1}=y^g=(xa)^g=x^ga^g=x^{-1}a^{-1}$. This shows that $xa=ax$. Therefore
\[
H=
\begin{cases}
A\cup x_2\cent A  {x_2}\cup x_3 \cent A {x_3}\cup x_4\cent A {x_4}&\textrm{in }\textsc{Case I},\\
A\cup x_3\cent A{x_3}\cup x_4\cent A{x_4}&\textrm{in }\textsc{Case II}.
\end{cases}
\]
Theorem \ref{main}~{\bf (iii)} gives us information about the element $n_0$ in addition to the structure that was studied by Hegarty and MacHale, and we have not used this in our analysis yet. We do so now.  Recall that $N=H\cup n_0H$,
 and $H\cap n_0H=\emptyset$ because $H$ has cardinality $|N|/2$.

Assume that \textsc{Case I} holds. Now $n_0A\subseteq n_0H$ and hence $n_0H$ contains a whole left coset of $A$ in $N$. Since $H$ contains elements from each left coset of $A$ in $N$, we get $n_0H\cap H\neq\emptyset$, a contradiction. Therefore $2$-groups in \textsc{Case I} do not arise in Theorem~\ref{main}~{\bf(iii)}. 

For the rest of the proof, we assume that \textsc{Case II} holds. From~\cite[line 11 from the bottom of page~$134$]{HeMa}, we have $A\lhd N$ and $N/A$ is elementary abelian of order $4$; therefore, $A\lhd G$ because $g$ also normalises $A$.

As $$2|A|=|N|/2=|H|=|A|+|\cent A {x_3}|+|\cent A{x_4}|,$$ we deduce  $|A:\cent A{x_3}|=|A:\cent A{x_4}|=2$. Clearly, $n_0\in x_2A$, otherwise, arguing as in the previous paragraph, we get $n_0H\cap H\ne\emptyset$. Replacing $x_2$ if necessary, we may assume that $n_0=x_2$. Thus $$n_0H=x_2A\cup x_2x_3\cent {A}{x_3}\cup x_2x_4\cent A{x_4}.$$ It is important to observe that since $A$ is normal in $N$ and $N/A$ is elementary abelian, we have $x_2x_3A=(x_2A)(x_3A)=x_4A$ and $x_2x_4A=(x_2A)(x_4A)=x_3A$; therefore $x_2x_3\cent A{x_3}$ is contained in the coset $x_4A$ and  $x_2x_4\cent A{x_4}$ is contained in the coset $x_3A$. Now, the condition $N=H\cup n_0H$ yields $x_3A=x_3\cent A{x_3}\cup x_2x_4\cent A{x_4}$. This implies that $A$ is the union of a coset of $\cent A{x_4}$ with a coset of $\cent A{x_3}$, and since each of these subgroups of $A$ has cardinality $|A|/2$, this can happen only when $\cent A {x_3}=\cent A{x_4}$. Since $N=\langle A,x_3,x_4\rangle$, we have $$\Zent N=\cent A{x_3}\cap\cent A{x_4}=\cent A{x_4}<A;$$ hence, $|N:\Zent N|=8$. Finally we observe that $G/A$ is elementary abelian of order $8$ because $g$ acts by conjugation inverting $x_3$ and $x_4$ and hence fixes the cosets $x_3A$ and $x_4A$.

\smallskip

Suppose now that $A$ is neither elementary abelian nor isomorphic to $C_4\times C_2^\ell$, for some $\ell\in\mathbb{N}$. From Lemma~\ref{abelian-orr}, there exists a subset $T$ of $A$ of cardinality at least $2$ with $\Cay(A,T)$ an $\Orr$ and such that the subgraph induced by $\Cay(A,T)$ on $T$ is weakly connected.

We claim that $A=\cent GA$, that is, no element of $G\setminus A$ centralises $A$. Observe that $$G \setminus A=x_3A \cup x_4A \cup x_3x_4A \cup gx_3A \cup gx_4A \cup gx_3x_4A.$$ From~\cite[Lemma~$1$]{HeMa}, $N$ has no abelian subgroup of index less then four. Therefore no element in the cosets $x_3A$, $x_4A$ and $x_3x_4A$ centralises $A$ and,  moreover, $$\Zent N=\cent A{x_3}=\cent A {x_4}=\cent A{x_3x_4}.$$  Observe now that $\Zent N$ is not elementary abelian because $A$ has no elementary abelian subgroup of index $2$.  Since $g$ acts  by conjugation inverting each element of $\Zent N$, we deduce that no element in the cosets $gA$, $gx_3A$, $gx_4A$ and $gx_3x_4A$ centralises $\Zent N\le A$. Thus our claim is proved.

 By Lemma~\ref{l:1}, since $|G:A|=8$, there are generators $a_1,a_2,a_3$ for $G$ modulo $A$ none of which is an involution. Furthermore, since $G/A$ is elementary abelian and $A$ is abelian, $a_i^2$ centralises $A$ for every $i\in \{1,2,3\}$. From the previous paragraph, $a_ia_j$ does not centralise $A$, for every $i,j\in \{1,2,3\}$ with $i\ne j$. In particular, the second hypothesis of Lemma~\ref{l:1} holds, and we deduce that $G$ admits an $\Orr$.

\smallskip

 Suppose now that $A$ is either elementary abelian or isomorphic to $C_4\times C_2^\ell$ for some $\ell\in\mathbb{N}$.
We deal with the two possible structures for $A$ individually. 
 
Suppose first that $A$ is isomorphic to $C_4 \times C_2^\ell$. By ignoring the exceptions from our statement, we may assume that $\ell \ge 6$. Let $a_0\in A\setminus \Zent N$ with $o(a_0)=4$ (observe that this is possible because $\Zent N<N$ cannot contain all the elements of order $4$ of $A$). Let $B$ be an elementary abelian subgroup of rank $\ell$ in $A$ that does not include the non-identity square element of $A$, and let $T$ be the Imrich generating set for $B$ with respect to some generating tuple. 
Let $$X:=\{gx_3a_0, gx_3a_0b, gx_3a_0b',gx_3a_0b''\},$$ where $b$, $b'$, and $b''$ are chosen from $B$ so that $\langle b,b',b''\rangle$ has order $8$ and none of the pairwise products from $\{1,b,b',b''\}$ is $(a_0^{-1})^{x_3}a_0$: the rank of $B$ is easily large enough that such choices are possible. Let $$Y:=\{gx_4a_0,gx_4a_0c,gx_4a_0c'\},$$ where $c$ and $c'$ are chosen from $B$ so that $\langle c, c'\rangle$ has order $4$ and none of the pairwise products from $\{1,c,c'\}$ is $(a_0^{-1})^{x_4}a_0$ (again, this is possible because $B$ has rank at least $6$). Let $$S:=[(Ba_0\setminus Ta_0)\setminus\{a_0\}] \cup X\cup Y \cup \{gx_3x_4\}.$$

The elements of $S \cap Ba_0$ have order $4$ and none is the inverse of another since $a_0^2 \notin B$. The pairwise products of the elements of $S$ that are not in $A$ cannot yield the identity. Furthermore, observe that, for every $z, z' \in \{1,b,b',b''\}$, we have $$(gx_3a_0z)(gx_3a_0z')=(a_0^{-1})^{x_3}a_0zz'\neq 1$$ by our choice of $b$, $b'$ and $b''$ and the fact that $a_0 \notin \Zent N=\cent A{x_3}$. Similarly,  for every $z, z' \in \{1,c,c'\}$, we have $$(gx_4a_0z)(gx_4a_0z')=(a_0^{-1})^{x_4}a_0zz'\neq 1$$ by our choice of $c$ and $c'$ and the fact that $a_0 \notin \Zent N=\cent A{x_4}$.  Finally, $$(gx_3x_4)^2=gx_3x_4gx_3x_4=x_3^gx_4^gx_3x_4=x_3^{-1}x_4^{-1}x_3x_4 \neq 1,$$ where in the third equality we have used  $x_3, x_4 \in H=\{n\in N\mid n^g=n^{-1}\}$, and in the last inequality we used the fact that $x_3$ and $x_4$ do not commute because $x_3x_4\notin H$ and $$x_4^{-1}x_3^{-1}=(x_3x_4)^{-1}\ne (x_3x_4)^g=x_3^gx_4^g=x_3^{-1}x_4^{-1}.$$ 
This proves that $S\cap S^{-1}=\emptyset$ and $\Cay(G,S)$ is an oriented graph.

By Proposition~\ref{B-distinct}, any automorphism $\varphi \in \Aut(\Cay(G,S))_1$ fixes $B$ and $a_0$ pointwise, so fixes $\langle a_0, B\rangle =A$ pointwise. 

Observe that, for any of the elements of $X$, its outneighbours via elements of $X\cup Y\cup\{gx_3x_4\}$ have four outneighbours in $A$ and four not in $A$; the elements of $Y$ each have three outneighbours via $X \cup Y \cup \{gx_3x_4\}$ in $A$ and five not in $A$; and $gx_3x_4$ has only one outneighbour via $X \cup Y \cup \{gx_3x_4\}$ that is in $A$. Thus $X^\varphi=X$, $Y^\varphi=Y$, and $(gx_3x_4)^\varphi=gx_3x_4$, for every $\varphi\in\Aut(\Cay(G,S))_1$. Furthermore, in $X$, since $\langle b,b',b''\rangle$ has order $8$, $gx_3a_0$ is the only vertex that does not have either $(a_0^{-1})^{x_3}a_0bb'$ or $(a_0^{-1})^x_3a_0bb''$ as an outneighbour. Therefore $gx_3a_0$ is fixed by $\Aut(\Cay(G,S))_1$. Similarly, in $Y$, since $\langle c,c'\rangle$ has order $4$, $gx_4a_0$ is the only vertex that does not have $(a_0^{-1})^{x_4}a_0cc'$ as an outneighbour. Therefore $gx_4a_0$ is fixed by $\Aut(\Cay(G,S))_1$. Thus $\Aut(\Cay(G,S))_1$ fixes the generating set $\{gx_3x_4, gx_3a_0,gx_4a_0\}\cup A$ for $G$ pointwise, so by Lemma~\ref{Watkins-Nowitz}, $\Aut(\Cay(G,S))_1=1$ and $\Cay(G,S)$ is an $\Orr$.

\smallskip

Suppose now that $A$ is elementary abelian, that is, $A\cong C_2^{\ell}$ for some $\ell\in\mathbb{N}$.  By ignoring the exceptions from our statement, we may assume that $\ell\ge 8$. As $x_3$ and $x_4$ do not commute, the commutator $d:=x_4^{-1}x_3^{-1}x_4x_3\ne 1$. Recall that $N/\Zent N$ is elementary abelian and hence the commutator subgroup $N' $ is contained in $\Zent N$. Thus $d\in \Zent N$. Let $d,b_1, \ldots, b_{\ell-2}$ be an irredundant generating set for $\Zent N\cong C_2^{\ell-1}$, and let $B:=\langle b_1, \ldots, b_{\ell-2}\rangle$. So $B$ is elementary abelian of rank $\ell-2 \ge 6$.  Let $T$ be the Imrich generating set for $B$ with respect to some generating tuple. 

Let $a_0\in A\setminus \Zent N$ and observe that $\langle\Zent N,g,x_3a_0,x_4a_0\rangle$ is a subgroup of $G$ having index $2$. Since $G$ is not generalised dihedral, there exists $v\in G\setminus \langle\Zent N,g,x_3a_0,x_4a_0\rangle$ with $o(v)>2$.
Let $$X:=\{gx_3a_0, gx_3a_0b, gx_3a_0b',gx_3a_0b''\},$$ where $b$, $b'$, and $b''$ are chosen from $B$ so that $\langle b,b',b''\rangle$ has order $8$ and none of the pairwise products from $\{1,b,b',b''\}$ is $(a_0^{-1})^{x_3}a_0$: the rank of $B$ is easily large enough that such choices are possible. Let $$Y:=\{gx_4a_0,gx_4a_0c,gx_4a_0c'\},$$ where $c$ and $c'$ are chosen from $B$ so that $\langle c, c'\rangle$ has order $4$ and none of the pairwise products from $\{1,c,c'\}$ is $(a_0^{-1})^{x_4}a_0$ (again, this is possible because $B$ has rank at least $6$). Let $$S:=gx_3x_4[(B \setminus T)\setminus\{1\}]\cup X \cup Y \cup \{v\} \cup \{gx_3x_4d\}.$$ 

For any $z, z' \in B$, we have $$(gx_3x_4z)(gx_3x_4z')=x_3^{-1}x_4^{-1}x_3x_4zz'
=dzz'\ne 1,$$ because $d \notin B$. The argument that no two elements of $X$ or $Y$ are inverses of one another is exactly as in the previous case. Notice also that $v$ is not in $\langle S \setminus\{v\}\rangle$, so $o(v)>2$ implies that $v^{-1} \notin S$. Now, observe that, for any $b \in B \setminus\{1\}$, we have $$gx_3x_4bgx_3x_4d=d^{x_4}bd=dbd=b\neq 1$$ since $d \in \Zent N$ which is elementary abelian. Finally, $(gx_3x_4d)^2=d \neq 1$. Thus $S\cap S^{-1}=\emptyset$ and $\Cay(G,S)$ is an oriented Cayley graph. 

Observe that $(gx_3x_4)^2=d$ implies $o(gx_3x_4)=4$, and since $\Zent N$ is elementary abelian we have $B<\Zent N=\Zent G$, so $gx_3x_4$ centralises $B$. Thus Proposition~\ref{B-distinct} applies to show that $\Aut(\Cay(G,S))_1$ fixes $B$ and $gx_3x_4$ pointwise.
Thus the cosets  of $B$ are blocks of imprimitivity for $\Aut(\Cay(G,S))_1$. Since the four vertices of $X$ lie in one coset of $B$, the three vertices of $Y$ lie in a different coset, and the vertices $v$ and $gx_3x_4d$ each lie in a different coset of $B$, we have $X^\varphi=X$ and $Y^\varphi=Y$, for every $\varphi\in \Aut(\Cay(G,S))_1$. The same argument as in the previous case again shows that $gx_3a_0$ and $gx_4a_0$ are fixed by $\Aut(\Cay(G,S))_1$.

Let $\varphi\in \Aut(\Cay(G,S))_1$. Since $d \in \Zent N$ has order $2$, we see that $gx_3x_4d$ has $2^\ell-2\ell-2$ outneighbours in $B$, but $v$ has at most one outneighbour in $B$. Thus $v^\varphi=v$ and $(gx_3x_4d)^\varphi=gx_3x_4d$. We now see that $\varphi$ fixes every point of the generating set $\{ gx_3a_0,gx_4a_0,v,gx_3x_4,gx_3x_4d\}\cup B$ for $G$. Since $\varphi$ was arbitrary, Lemma~\ref{Watkins-Nowitz} gives that $\Cay(G,S)$ is an $\Orr$.
\end{proof}

\section{Proof of Theorem~$\ref{conj}$}\label{s:conclusion}

We are now ready to prove our main theorem, Theorem~\ref{conj}.

\begin{proof}[Proof of Theorem~$\ref{conj}$]
If $|G|=2$, then $\Cay(G, \emptyset)$ is an $\Orr$ for $G$. We henceforth assume that $G$ is not generalised dihedral.

By Theorem~\ref{main}, $G$ admits an $\Orr$ unless $G$ is as in Theorem~\ref{main}~{\bf (ii)}, Theorem~\ref{main}~{\bf (iii)}, or $G \cong Q_8, C_3^2$, or $C_3\times C_2^3$. The final three possibilities are in our list of exceptions in the statement of Theorem~\ref{conj}.
Suppose now that $G$ is as in Theorem~\ref{main}~{\bf (ii)}. By Proposition~\ref{prop:reduction}, either $G$ admits an $\Orr$, or one of the following  possibilities holds:
\begin{enumerate}
\item[(1)] $\ell$ and $\kappa$ are non-negative integers, $V$ is an elementary abelian $2$-group of rank $2\ell +\kappa$ with generating set $\{v_1,w_1,\ldots, v_\ell,w_\ell,e_1,\ldots, e_\kappa\},$ and $G=\langle V,x\rangle$, where  $v_i^x=w_i$, $w_i^x=v_i$ for $i\in \{1,\ldots, \ell\}$ and $e_i^x=e_i$ for $i\in \{1,\ldots, \kappa\}$; and
\begin{itemize}
\item[(i)] $x^2=1$; or
\item[(ii)] $\kappa\ge 1$ and $x^2=e_1$; or
\end{itemize}
\item[(2)] $\langle A,g\rangle$ is isomorphic to $D_4 \times C_2^\ell$ for some $\ell \in \mathbb N$.
\end{enumerate}

If (1)(i) holds, then by Lemma~\ref{ell=1} we have $\ell\ge 2$. Now by Lemma~\ref{biproof}, $G$ admits an $\Orr$ as long as $2\ell+\kappa \ge 8$; equivalently, as long as $|G|\ge 2^9$.
If (1)(ii) holds, then by Lemma~\ref{biiproof}, $G$ admits an $\Orr$ as long as $2\ell+\kappa \ge 7$; equivalently, as long as $|G| \ge 2^8$.
If (2) holds, then by Lemma~\ref{cproof}, $G$ admits an $\Orr$ as long as $\ell \ge 6$; equivalently, as long as $|G| \ge 2^{10}$.
On the other hand, if $G$ is as in Theorem~\ref{main}~{\bf (iii)}, then by Proposition~\ref{iiiproof}, $G$ admits an $\Orr$ when $|G|\ge 2^{11}$.

It remains to check $2$-groups of order at most $2^9$ that are not generalised dihedral and that satisfy Theorem~\ref{main}~{\bf (ii)} and the $2$-groups of order at most $2^{10}$ that satisfy Proposition~\ref{iiiproof}.
 These were all checked with the aid of \texttt{magma}, and the only groups that do not admit an $\Orr$ are those listed.
\end{proof}

\end{document}